\definecolor{mygreen}{rgb}{0,0.6,0}
\definecolor{mygray}{rgb}{0.5,0.5,0.5}
\definecolor{mymauve}{rgb}{0.58,0,0.82}
\tiny\color{mygray}, 
\def\makeautorefname#1#2{\expandafter\def\csname#1autorefname\endcsname{#2}}
\def\equationautorefname~#1\null{(#1)\null}
\newtheorem{thm}{Theorem}[section]
\newtheorem{lem}{Lemma}[section]
\newtheorem{prob}{Problem}[section]
\theoremstyle{definition}
\newtheorem{defn}{Definition}[section]
\newtheorem{exmp}{Example}[section]
\newtheorem*{obs*}{Observation}
\let\c@obs=\c@thm
\let\c@cor=\c@thm
\let\c@prop=\c@thm
\let\c@lem=\c@thm
\let\c@prob=\c@thm
\let\c@con=\c@thm
\let\c@conj=\c@thm
\let\c@defn=\c@thm
\let\c@notn=\c@thm
\let\c@notns=\c@thm
\let\c@exmp=\c@thm
\let\c@ax=\c@thm
\let\c@pro=\c@thm
\let\c@ass=\c@thm
\let\c@warn=\c@thm
\let\c@rem=\c@thm
\let\c@sch=\c@thm
\title{Construction, Extension and Paths of Near-Homogeneous Tournaments}
\author{Rongxia Tang$^{1,\dag}$}
\thanks{$\dag$ Supported by National Key Research and Development Program of China (2020YFA0712500)}
\author{Zhaojun Chen$^{4}$}
\author{Zan-Bo Zhang$^{1,2,3,*}$}
\thanks{$^*$ Corresponding author. Email address: zanbozhang@gdufe.edu.cn \& eltonzhang2001@gmail.com.
 Supported by Guangdong Basic and Applied Basic Research Foundation (2022A1515010900, 2024A1515012286).}
 \address{$^1$School of Mathematics, Sun Yat-sen University, 
135 Xingang Road West, Guangzhou 510275, China}
 \address{$^2$ School of Statistics and Mathematics \\
 Guangdong University of Finance \& Economics \\
 Guangzhou 510320, China}
\address{$^3$ Institute of Artificial Intelligence and Deep Learning \\
 Guangdong University of Finance \& Economics \\
 Guangzhou 510320, China}
\address{$^4$ The Division of Physics, Mathematics and Astronomy, California Institute of Technology,
5313 Mountain Meadow Ln, La Canada Flintridge, CA 91011, USA}
\date{\today}
\begin{document}

\begin{abstract}

      A homogeneous tournament is a tournament with $4t+3$ vertices such that every arc is contained in exactly
      $t+1$ cycles of length $3$. Homogeneous tournaments are the first class of tournaments that are proved
      to be path extendable, which means that every nonhamiltonian path $P$ in such a tournament $T$
      can be extended to a path $P'$ with the same initial and terminal vertex and $V(P')=V(P)\cup \{u\}$
      for a certain vertex $u\in V(T)\backslash V(P)$.
      In order to find more path extendable tournaments we study the generalization of homogeneous tournaments
      called near-homogeneous tournaments,
      in which every arc is contained in $t$ or $t+1$ cycles of length $3$.
      Near-homogeneity has been defined in tournaments with $4t+1$ vertices.
      In this paper, we raise a new method to construct near-homogeneous tournaments
      with $4t+1$ vertices.
      We then show that the definition of near-homogeneous tournament can be extended
      to tournaments with an even number of vertices.
      Finally we verify path extendability of near-homogeneous tournaments, thus expand the class of
      path extendable tournaments.

\noindent \textbf{Keywords}: homogeneous tournament, near-homogeneous tournament, doubly regular tournament, path extendability

\noindent \textbf{2010 MSC}: 05C20, 05B20
\end{abstract}

\maketitle

\tableofcontents

\section{Introduction}

We consider digraphs that are strict, i.e. without loops or parallel arcs, in this paper.
A \textbf{tournament} $T$ is a digraph consisting of a set of vertices $V(T)$
and a set of arcs (edges with direction) $A(T)$
such that there is exactly one arc between every two vertices $u$ and $v$.
If there is an arc from $u$ to $v$, we say that $u$ is an \textbf{in-neighbor} of $v$ and
$v$ is an \textbf{out-neighbor} of $u$, and denote it by $u\rightarrow v$.
The number of in-neighbors of $u$ is called the \textbf{in-degree} of $u$, denoted by $d^-(u)$.
The number of out-neighbors of $u$ is called the \textbf{out-degree} of $u$, denoted by $d^+(u)$.
The set of out-neighbors of $u$ is called the \textbf{out-neighborhood} of $u$, and
the set of in-neighbors of $u$ in called the \textbf{in-neighborhood} of $u$.
We call out-degree and in-degree of a vertex its \textbf{semi-degrees}.
A \textbf{regular tournament} $T$ is then a tournament in which $d^+(u) = d^-(u)$ for every vertex $u\in V(T)$.
The \textbf{irregularity} of a tournament is defined as $i(T)=\max\{|d^+(u)-d^-(u)|, u \in V(T)\}$,
and can be viewed as a characterization of how close a tournament is to be regular.
Let $V'$ be a subset of $V(T)$, $\langle V' \rangle$ stands for the sub-tournament induced
by $V'$, i.e. the sub-tournament with vertex set $V'$
and the arc set containing all the arcs among vertices in $V'$.
Let $u$ and $v$ be two distinct vertices in a tournament $T$.
A \textbf{$\boldsymbol{(u,v)}$-path} of length $k$, or a \textbf{$\boldsymbol{(u,v)}$-$\boldsymbol{k}$-path},
is a path from $u$ to $v$ with $k$ arcs.
The \textbf{distance} from $u$ to $v$, denoted by $\boldsymbol {d(u,v)}$,
is the length of the shortest $(u,v)$-path.
A \textbf{$\boldsymbol k$-cycle} is a cycle containing $k$ vertices (or equivalently $k$ arcs).
For the terms and concepts not defined in this paper, the reader is referred to \cite{BG2008}.

Paths and cycles in networks or graphs are of great interests to graph theorists.
In search of path and cycle structures, graphs of high density and high symmetry are extensively investigated.
Tournaments are among the classes of dense digraphs.
Hence it is not surprising that they bear rich path and cycle structures.
All tournaments are traceable,
and strong tournaments are hamiltonian, pancyclic, vertex pancyclic and
even cycle extendable if a well-defined exceptional class is excluded.
However, properties involving paths of specific lengths between any vertex pair such as
hamiltonian-connectedness and panconnectedness do not generally hold in strong tournaments,
and symmetry starts to play a role in sufficient conditions for such properties.
Results of Alspach and Alspach et al. (\cite{Alspach1967, AlspachReid1974}) together
showed that for any arc $uv$ in a regular tournament,
there exist a $(v,u)$-path of every length from $2$ to $n-1$,
as well as a $(u,v)$-path of every length from $3$ to $n-1$.
Jakobsen (\cite{Jakobsen1972}) proved that every edge in an almost regular tournament with
$n\ge 8$ vertices is contained in cycles of all lengths $k$, $4 \le k \le n$.
Thomassen (\cite{Thomassen1980}) unified the above results by proving that if the irregularity $i(T)\le \frac{n-9}{5}$, then
a tournament $T$ is strongly panconnected. When it comes to path extendability,
Zhang et al. (\cite{ZhangZhang2017}) proved that in a regular tournament $T$, every path of length from $2$ to $n-1$ is extendable,
unless $T$ belongs to some exceptional graphs that can be characterized. Further in \cite{Zhang2017}, Zhang
proved that homogeneous tournaments are path extendable.

Homogeneous tournaments are the first class of tournaments that are found to be path extendable.
They must be of order $n=4t+3$ for certain integer $t$.
In order to find path extendable tournaments of other orders, we explore
generalizations of homogeneous tournaments that are called near-homogeneous tournaments in this paper.

Symmetry can be viewed as the similarity between parts in an object,
which is reflected in a regular tournament by the fact
that every vertex having the same number of in-neighbors and out-neighbors.
To figure out subsets of regular tournaments that are even more symmetric,
we can require every arc in a tournament $T$ to be contained in a constant number of $3$-cycles,
which gives us the definition of \textbf{homogeneous tournaments},
which is also called \textbf{doubly regular tournaments}.
It is commonly known that a homogeneous tournament $T$ must have $n=4t+3$ vertices, and each arc
$uv$ is contained in $t+1$ $3$-cycles.
We name the number of $3$-cycles containing an arc $uv$ as the \textbf{$\boldsymbol 3$-cyclic-index} of $uv$.

In \cite{Reid and Brown},
Reid and Brown
showed by constructive proofs
that the existence of a homogeneous tournament with $n$ vertices is equivalent to the existence of
a skew Hadamard matrix of order $n+1$, which has a wide application in combinatorial designs
(\cite{ChristosStylianou2008,FMX2015,KX2018,LMX2021}).
Hence the various constructions of skew Hadamard matrices (surveyed in \cite{ChristosStylianou2008})
also gives constructions of homogeneous tournaments.
On the other hand, homogeneous tournament is a special subset of directed strongly regular graphs (DSRG for short),
another topic of rich problems and results, a survey of which can be found in \cite{FengZeng2016}.

A digraph is called \textbf{path extendable} if for any non-hamiltonian $(u,v)$-$k$-path $P$,
there exists a $(u,v)$-$(k+1)$-path $P'$ such that $V(P)\subseteq V(P')$.
Path extendability implies existence of $(u,v)$-path of every length from $d(u,v)$ to $n-1$,
where $n$ denotes the number of vertices in a tournament $T$,
so it is a significant strengthen of \textbf{hamiltonian-connectedness}, which means the
existence of hamiltonian paths of both directions between every vertex pair.
As mentioned above, path extendability of homogeneous tournaments has been verified.
We hope to find path extendable tournaments with $n\not\equiv 3 \pmod 4$ vertices by asking for a symmetry similar to homogeneity in tournaments,
which is the main theme of the current paper.

In \cite{Tabib1980},
Tabib generalized the concept of homogeneity to tournaments with $n=4t+1$ vertices and defined
\textbf{near-homogeneous tournaments} as follows.
\begin{defn}
A tournament $T$ of order $n=4t+1$, $t\ge 1$ is \textbf{near-homogeneous} if any arc of $T$ is contained in $t$ or $t+1$ $3$-cycles.
\end{defn}
Based on a result in \cite{Astie1975},
Asti\'{e} and Dugat in \cite{AstieDugat1992} figured out an exhaustive algorithm to
find a particular kind of near-homogeneous tournaments, i.e. the vertex-transitive tournaments
with two arc orbits; through assistance of computer programming,
they found all six such near-homogeneous tournaments with order less than $1000$,
i.e. with $n\in \{29, 53, 173, 229, 293, 733\}$ vertices.
Later in \cite{Moukouelle1998}, Moukouelle put forward a recursive method
to construct a near-homogeneous tournament of order $16t+13$,
given any homogeneous tournament of order $4t+3$.
In this paper, we present a new method to construct a near-homogeneous tournament of order $8t+5$,
given any homogeneous tournament of order $4t+3$.
Our method is different from their ones.
It is also a recursive construction, but only uses a copy of $T$ and a copy of $T^*$, the
\textbf{complementary} of $T$, which is obtained by reversing the direction of every arc in $T$,
while the method of Moukouelle needs four copies of $T$.
Next, we extend the definition of near-homogeneous tournament to those with an even number of vertices.
In particular, we find that current definition of near-homogeneous tournament is applicable for tournaments
of order $4t+2$.
Finally we verify path extendability of all near-homogeneous tournaments.

\section{Construction of near-homogeneous tournaments with $n=4t+1$ vertices}
In \cite{Moukouelle1998}, Moukouelle constructed an infinite class of near-homogeneous tournaments
by connecting four copies
of a homogeneous tournaments to an isolated vertex, and adding arcs among them.
In this section, we figure out an original method to construct near-homogeneous tournaments from
just two homogeneous tournaments of the same order.

We use a homogeneous tournament $T$ with $4t+3$ vertices and its complement $T^*$,
which is the key idea of this method.
We now have $8t+6$ vertices. In order to get a graph with $8t+5\equiv 1$ mod $4$ vertices,
we identify a vertex $x\in T$ with the corresponding $x^*\in T^*.$
Then we further reverse the directions of some arcs that incident to $x$,
and add arcs connecting $T\setminus\{x\}$ and $T^*\setminus \{x^*\}$.
The detailed construction is described below.

Let $T$ be a given homogeneous tournament with $4t+3$ vertices.
Let $T^*$ be a copy of the complement of $T$.
For any vertex $u\in T$, we denote the corresponding vertex of $u$ in $T^*$ be $u^*$.
For each vertex $u\in T$, denote the set of in-neighbors and the set of out-neighbors of $u$ in $T$ as $I(u)$ and $O(u),$ respectively; for each vertex $u^*\in T^*$, denote the set of in-neighbors and
the set of out-neighbors of $u^*$ in $T^*$ as $I(u^*)$ and $O(u^*),$ respectively.
Arbitrarily select a vertex $x$ of $T$, delete $x$ from $T$ and delete $x^*$ from $T^*$.
Then, add a new vertex $z$.

Construct a tournament $H$ with the vertex set $V(H)=O(x)\cup I(x) \cup O(x^*) \cup I(x^*) \cup {z}$,
and the arc set defined by the out-neighborhood of every vertex as below:
\\a. For each $u\in O(x),$ the out-neighborhood of $u$ in $H$ is $O(u)\cup (O(u^*)-\{x^*\})\cup \{z\}$;
\\b. For each $u\in I(x),$ the out-neighborhood of $u$ in $H$ is $(O(u)-\{x\})\cup O(u^*)\cup \{u^*\}$;
\\c. For each $u^*\in O(x^*),$ the out-neighborhood of $u^*$ in $H$ is $I(u)\cup O(u^*)$;
\\d. For each $u^*\in I(x^*),$ the out-neighborhood of $u^*$ in $H$ is $(O(u^*)-\{x^*\})\cup (I(u)-\{x\})\cup \{z\}\cup \{u\}$;
\\e. The out-neighborhood of $z$ in $H$ is $I(x)\cup O(x^*)$.
\begin{thm} \label{thm:8t+5}
The tournament $H$ constructed as above is a near-homogeneous tournament of order $8t+5$.
\end{thm}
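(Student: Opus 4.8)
The plan is to regard $H$ as assembled from two almost-complete copies of $T$ plus a hub vertex, and then to verify in turn that $H$ is a tournament, that it is regular, and finally that every arc has $3$-cyclic-index $2t+1$ or $2t+2$. Throughout I will use that $T$ (hence also $T^*$) is regular of semi-degree $2t+1$ and that for every arc $p\to q$ of $T$ one has $|O(p)\cap O(q)|=t$, $|I(p)\cap I(q)|=t$, $|O(p)\cap I(q)|=t$, and $|I(p)\cap O(q)|=t+1$, the last being the number of $(q,p)$-$2$-paths, as recorded in the introduction. Write $A=O(x)$, $B=I(x)$, $C=O(x^*)$, $D=I(x^*)$, so that $|A|=|B|=|C|=|D|=2t+1$, with $C=\{u^*:u\in B\}$ and $D=\{u^*:u\in A\}$, and $V(H)=A\cup B\cup C\cup D\cup\{z\}$ has $8t+5=4(2t+1)+1$ vertices. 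A near-homogeneous tournament of this order must have parameter $t'=2t+1$, so the goal is to show every arc of $H$ lies in $2t+1$ or $2t+2$ triangles.

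First I would record the structural dictionary obtained by reading the adjacency rules (a)--(e): the induced sub-tournament $\langle A\cup B\rangle$ is exactly $T-x$ and $\langle C\cup D\rangle$ is exactly $T^*-x^*$, the cross arcs satisfy ``left-$u\to$ right-$w^*$ iff $w\to u$ in $T$'' for $u\ne w$, the diagonal cross arcs are $u\to u^*$ for $u\in B$ and $u^*\to u$ for $u\in A$, and $z$ has out-neighbourhood $B\cup C$ and in-neighbourhood $A\cup D$. With this dictionary I would check that $H$ is a genuine tournament, i.e. that between every unordered pair exactly one arc is declared; this is a short case analysis on the classes of the two endpoints, each case reducing to the tournament property of $T$, with the diagonal arcs and the $z$-arcs handled separately. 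A parallel count of the five pieces of each out-neighbourhood shows every out-degree equals $4t+2=(|V(H)|-1)/2$, so $H$ is regular.

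The heart of the argument is the $3$-cyclic-index computation. For an arc $a\to b$ the index equals $|N^+_H(b)\cap N^-_H(a)|$, the number of $w$ with $b\to w\to a$, and I would compute it by splitting $w$ over the five classes, translating each piece into an intersection of in-/out-neighbourhoods in $T$ or $T^*$, and substituting the constants $t$ and $t+1$ with $\pm1$ corrections whenever $x$, $x^*$, $z$, or a diagonal arc intervenes. As a template, the arcs leaving $z$ already show the expected dichotomy: for $b\in B$ one gets $|O(b)\cap O(x)|+|I(b)\cap O(x)|=t+(t+1)=2t+1$, while for $b\in C$, say $b=c^*$ with $c\in B$, one gets $|I(c)\cap O(x)|$ counted twice, namely $(t+1)+(t+1)=2t+2$; the arcs entering $z$ behave symmetrically. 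I would then run the analogous computation for the two within-side types (inside $T-x$ and inside $T^*-x^*$, where the totals differ from the $T$-values by the contributions of cross-neighbours and of $z$) and for the several cross types, confirming $2t+1$ or $2t+2$ in every case.

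The main obstacle I expect is exactly this last case analysis, and within it the cross arcs together with the diagonal families $u\to u^*$ and $u^*\to u$: these are the arcs whose triangle counts genuinely mix the $T$-side and the $T^*$-side, so each count is a sum of intersection numbers drawn from \emph{both} $T$ and $T^*$, further perturbed by the deletion of $x,x^*$ and the insertion of $z$. Keeping all the $\pm1$ corrections exact, and verifying that the two admissible values $2t+1$ and $2t+2$ are never exceeded, is where the real care lies; the remaining verifications are routine once the dictionary above is in place. If a shorter route is desired, I would look for an anti-automorphism of $H$ interchanging the two sides to halve the number of arc types, but the uniform intersection numbers of $T$ already make the direct computation tractable.
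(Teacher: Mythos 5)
Your strategy coincides with the paper's: split $V(H)$ into $O(x)$, $I(x)$, $O(x^*)$, $I(x^*)$, $\{z\}$, verify regularity, and then reduce every triangle count, class by class, to the intersection numbers $t$ and $t+1$ supplied by homogeneity of $T$ and $T^*$. Your structural dictionary (the two sides induce $T-x$ and $T^*-x^*$, the cross rule $u\to w^*$ iff $w\to u$ in $T$, the diagonals $u\to u^*$ for $u\in I(x)$ and $u^*\to u$ for $u\in O(x)$, and $z$'s neighbourhoods) is accurate, the degree count is right, the target values $2t+1$ and $2t+2$ are correctly identified, and the two template computations you carry out for arcs leaving $z$ agree with the paper's Cases 2 and 3.

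The problem is that, as a proof, the attempt stops exactly where the theorem lives. The within-side arcs, the cross arcs, and the diagonal arcs are only promised (``confirming $2t+1$ or $2t+2$ in every case''), and you yourself flag them as the place ``where the real care lies.'' This is not a routine omission: the whole content of near-homogeneity is that each of those counts lands in $\{2t+1,2t+2\}$, and the $\pm 1$ corrections coming from deleting $x$ and $x^*$, from $z$, and from the diagonal arcs are precisely what could push a count outside that window. The paper needs fourteen cases for this, several of which split further on the orientation of the $T$-arc between $u$ and $v$ and on whether $v^*=u^*$ (its Cases 7, 10--14); until the analogues of these are written out, the theorem is not established, even though your plan, if executed, would succeed. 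One simplification worth borrowing from the paper: since $H$ is regular of out-degree $4t+2$, the $3$-cyclic-index of the arc joining $u$ and $v$ equals $4t+2-|N^+_H(u)\cap N^+_H(v)|$ regardless of the arc's direction, so it suffices to show that every unordered pair of vertices has $2t$ or $2t+1$ common out-neighbours; this orientation-free formulation removes the need to track which endpoint beats which in roughly half of your cases.
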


\begin{proof}
See Appendix \ref{sec:Proof8t+5}.
\end{proof}

To prove the theorem we need to verify that the constructed tournament $H$
is regular and every arc of $H$ is contained in $2t$ or $2t+1$ $3$-cycles.
We leave the routine verification in the Appendix.

\section{Near-homogeneous tournaments of even order}
For convenience, we call a vertex with out-degree $d^+$ and in-degree $d^-$ a $(d^+,d^-)$-vertex.
A tournament $T$ on $n=2k$ vertices
is called \textbf{almost regular} if each vertex $v\in V$
is a $(k,k-1)$-vertex or a $(k-1,k)$-vertex.
We will define near-homogeneous tournaments of even order among almost regular tournaments.

If we classify the arcs in a tournament by the degrees of its endvertices,
then in a regular tournament, there is only one class of arcs,
whose both endvertices are $(\frac{n-1}{2}, \frac{n-1}{2})$-vertices.
Nonetheless, in an almost regular tournament $T$ with $n=2k$ vertices,
there are as many as four types of arcs, named as below.

Class $A$: the arcs $(u,v)$ with $u$ and $v$ being $(k,k-1).$

Class $B$: the arcs $(u,v)$ with $u$ and $v$ being $(k-1,k).$

Class $C$: the arcs $(u,v)$ with $u$ being $(k,k-1)$ and $v$ being $(k-1,k)$.

Class $D$: the arcs $(u,v)$ with $u$ being $(k-1,k)$ and $v$ being $(k,k-1).$

The concept of $3$-cyclic-index can be viewed as a way to further classify the arcs.
By definition, the $3$-cyclic-index of every arc in a homogeneous tournament remains a constant $t$,
and the $3$-cyclic-index of an arc in a near-homogeneous tournaments on $4t+1$ vertices
can be $t$ or $t+1$, therefore the arcs can be further classified into two categories
in a near-homogeneous tournament.
In the case that the tournament is almost regular,
because the arcs have already been classified into four categories by their endvertices,
it sounds reasonable to ask whether the $3$-cyclic-index can remain constant among each class of arcs at first.
Our first result in the following subsection
will show that this could only happen when $n \equiv 2 \pmod 4$.

For further discussion, we calculate the number of arcs in each class in an almost regular
tournament. There are $k$ $(k,k-1)$-vertices, so there are $k \choose 2$ arcs of Class $A$.
Similarly, there are $k \choose 2$ arcs of Class $B$.
Now, for each $(k,k-1)$-vertex $u$, there are $k$ arcs sending out from $u$.
Hence, in total there are $k^2$ arcs sending out from $(k,k-1)$-vertices,
each of which is either in Class $A$ or in Class $C$.
Therefore, there are $k^2- {k \choose 2} =\frac{k(k+1)}{2}={k+1 \choose 2}$ edges in Class $C$.
And there are ${2k \choose 2}-2{k \choose 2}-{k+1 \choose 2}={k \choose 2}$ edges in Class $D$.


For any arc $uv$,
once the class that $uv$ belongs to and the $3$-cyclic-index $\lambda_{uv}$ are determined,
the number of common in-neighbors of $u$ and $v$,
the number of common out-neighbors of $u$ and $v$,
the number of $(u,v)$-$2$-paths and the number of $(v,u)$-$2$-paths
are all determined, as shown in Table \ref{tbl:neighbors}. 

We firstly prove some basic facts about the $3$-cyclic-index of arcs
in each class $A$, $B$, $C$ and $D$, in the form of lemma.

\begin{lem} \label{lem:ClassC=ClassD}
Any $3$-cycle contains exactly an arc of Class $C$ iff it contains exactly an arc of Class $D$,
and thus the sum of $3$-cyclic-index of the arcs in Class $C$ equals that of the arcs in Class $D$.
\end{lem}

\begin{proof}
Suppose an arc $uv$ of Class $C$ is contained in a $3$-cycle $uvwu$.
Since $u$ is a $(k,k-1)$-vertex and $v$ is a $(k-1,k)$-vertex, then either $vw$ or $wu$ must be an arc
of Class $D$. Therefore a $3$-cycle containing an arc of Class $C$ must contain an arc of Class $D$, and vice versa.
Note also that a $3$-cycle can contain at most one arc of Class $C$ and Class $D$, respectively.
Thus we have the conclusion.
\end{proof}

\begin{lem} \label{lem:ABlessthant}
Let $T$ be an almost regular tournament with $n=2k$ vertices.
If the arcs in Class $A$ of $T$ have constant $3$-cyclic-index, denoted by $\lambda_A$,
then we have $\lambda_A \leq \lceil\frac{k}{2}\rceil$.
Similarly, if the arcs in Class $B$ have constant $3$-cyclic-index,
denoted by $\lambda_B$, then we have $\lambda_B \leq  \lceil\frac{k}{2}\rceil$.
\end{lem}

\begin{proof}
We only prove the conclusion for arcs in Class $A$, and for arcs in Class $B$
the conclusion follows similarly.

Let the sub-tournament induced by all the $(k,k-1)$-vertices be $T_1$, and the one induced by all the $(k-1,k)$-vertices be $T_2$. A $2$-path with endvertices as $(k,k-1)$-vertices
either lays entirely in $T_1$, or goes from $T_1$ to $T_2$ and then goes back to $T_1$.
From Table \ref{tbl:neighbors}, the number of 2-paths with endvertices as $(k,k-1)$-vertices and the number of 2-paths with endvertices as $(k-1,k)$-vertices
must be ${k\choose 2}(2\lambda_A-1)$ and ${k\choose 2}(2\lambda_B-1)$, respectively.

First we consider the case when $k=2t+1$.
Since there are $k$ vertices in $T_1$,
and for every $u\in V(T_1)$ the number of $2$-paths go through it is $d^+_{T_1}(u)d^-_{T_1}(u)\le (\frac{k-1}{2})^2$,
the total number of $2$-paths of the first kind is at most $\frac{k(k-1)^2}{4}$.
For a vertex $w$ in $T_2$, the number of $2$-paths of the second kind
which go through it is at most $\frac{k+1}{2} \cdot \frac{k-1}{2}$, hence the total number of $2$-paths of the second kind is at most $\frac{k(k^2-1)}{4}$.
Therefore
$${k \choose 2}(2\lambda_A-1) \le \frac{k(k-1)^2}{4} + \frac{k(k^2-1)}{4} = \frac{k^2(k-1)}{2} = t(2t+1)^2,$$
from which we get $\lambda_A \le t+1=\lceil\frac{k}{2}\rceil$.

Now suppose that $k=2t$.
The number of $2$-paths going through any vertex $u \in V(T_1)$
is $d^+_{T_1}(u)d^-_{T_1}(u)\le \frac{k}{2}\cdot(\frac{k}{2}-1)=\frac{k(k-2)}{4}$,
so the total number of $2$-paths of the first kind is at most $\frac{k^2(k-2)}{4}$.
For a vertex $w$ in $T_2$, the number of $2$-paths of the second kind
which go through it is at most $(\frac{k}{2})^2$,
and the total number of which is at most $k(\frac{k}{2})^2=\frac{k^3}{4}$.
Therefore
$${k \choose 2} (2\lambda_A-1) \le \frac{k^2(k-2)}{4}+\frac{k^3}{4}=t^2(2t-2)+2t^3=2t^2(2t-1),$$
from which we get $\lambda_A \le \frac{2t+1}{2}$. Since $\lambda_A$ is an integer,
$\lambda_A \le t=\lceil\frac{k}{2}\rceil$.
\end{proof}

\begin{table}
\begin{center}
\caption{Classify of vertices according to adjacent relationship
with $u$ and $v$, with their number counted by $3$-cyclic-index
of the arc $uv$ in an almost regular tournament} \label{tbl:neighbors}
\begin{tabular}{ |c|c|c|c|c| }
 \hline
   \makecell{Class of \\ arc $uv$}& \makecell{number \\of common \\out-neighbors \\of $u,v$}
   & \makecell{number \\ of common \\in-neighbors \\of $u,v$}
   & \makecell{number of \\vertices $w$ \\ satisfying \\$u\to w \to v$}
   & \makecell{number of \\vertices $w$\\satisfying \\$v\to w \to u$\\($3$-cyclic-index)}\\
 \hline Class $A$& $k-\lambda_{uv}$&$k-\lambda_{uv}-1$&$\lambda_{uv}-1$&$\lambda_{uv}$\\
\hline Class $B$&$k-\lambda_{uv}-1$&$k-\lambda_{uv}$&$\lambda_{uv}-1$&$\lambda_{uv}$\\
\hline Class $C$&$k-\lambda_{uv}-1$&$k-\lambda_{uv}-1$&$\lambda_{uv}$&$\lambda_{uv}$\\
\hline Class $D$&$k-\lambda_{uv}$&$k-\lambda_{uv}$&$\lambda_{uv}-2$&$\lambda_{uv}$\\
 \hline
\end{tabular}
\end{center}
\end{table}

\subsection{Near-Homogeneous Tournament of Order $\boldsymbol{4t+2}$}

\begin{thm} \label{thm:4t+2}
If the $3$-cyclic-index of each class of arcs remains a constant in an almost regular
tournament $T$ with $n=2k$ vertices, then $n=4t+2$ for certain integer $t\ge 1$.
Furthermore, $\lambda_C = t$
and $\lambda_A = \lambda_B = \lambda_D = t+1$,
where $\lambda_A$, $\lambda_B$, $\lambda_C$, and $\lambda_D$ denote
the $3$-cyclic-index of arcs of class $A,B,C,D$, respectively.
\end{thm}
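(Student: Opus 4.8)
The plan is to double-count $3$-cycles according to the semi-degree type of their three vertices and match this against the four assumed constants. Since $T$ is almost regular, a semi-degree sum shows there are exactly $k$ vertices of type $(k,k-1)$ and $k$ of type $(k-1,k)$; call these sets $P$ and $M$. I will use the class sizes already computed above, namely $|A|=|B|=|D|=\binom{k}{2}$ and $|C|=\binom{k+1}{2}$, and the fact that the $3$-cyclic index of an arc $u\to v$ equals $|O(v)\cap I(u)|$. Classifying each $3$-cycle by the number of its vertices in $P$ gives four counts $N_{PPP},N_{PPM},N_{PMM},N_{MMM}$; inspecting the arc classes around such a cycle, a $PPP$-cycle carries three arcs of class $A$, an $MMM$-cycle three of class $B$, while a $PPM$-cycle carries exactly one arc of each of classes $A,C,D$ and a $PMM$-cycle one of each of $B,C,D$.

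Counting the incidences (arc, $3$-cycle through it) within each class then yields $\binom{k}{2}\lambda_A=3N_{PPP}+N_{PPM}$, $\binom{k}{2}\lambda_B=3N_{MMM}+N_{PMM}$ and $\binom{k+1}{2}\lambda_C=N_{PPM}+N_{PMM}=\binom{k}{2}\lambda_D$. The last equality gives the key relation $(k+1)\lambda_C=(k-1)\lambda_D$. In addition, summing all indices counts each cycle three times, so $\binom{k}{2}(\lambda_A+\lambda_B+\lambda_D)+\binom{k+1}{2}\lambda_C=3N$, where $N=\binom{2k}{3}-\sum_v\binom{d^+(v)}{2}=\tfrac{1}{3}k(k^2-1)$ is read off from the semi-degree sequence; call this relation $(\mathrm{I})$.

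Next I would pin down the parity and the values $\lambda_C,\lambda_D$ from integrality together with the crude bounds $\lambda_C\le d^+(m)=k-1$ and $\lambda_D\le d^+(p)=k$ (heads of class-$C$ and class-$D$ arcs lie in $M$ and $P$ respectively). If $k$ were even then $\gcd(k-1,k+1)=1$, so $(k-1)\mid\lambda_C$ and hence $\lambda_C\in\{0,k-1\}$; the value $k-1$ forces $\lambda_D=k+1>k$, while $\lambda_C=0$ forces $\lambda_D=0$, and then $(\mathrm{I})$ gives $\lambda_A+\lambda_B=2(k+1)$, impossible since $\lambda_A,\lambda_B\le k-1$. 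Hence $k$ is odd, $n=2k=4t+2$ with $k=2t+1$; now $t\mid\lambda_C$ and the same bounds leave only $\lambda_C=t$, whence $\lambda_D=t+1$. Substituting into $(\mathrm{I})$ gives $\lambda_A+\lambda_B=2t+2$.

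The remaining and hardest step is to show $\lambda_A=\lambda_B$; subtracting the first two equations reduces this to proving the identity $3(N_{PPP}-N_{MMM})+(N_{PPM}-N_{PMM})=0$. I would establish it by counting transitive triples by their source: writing $c_p=|O(p)\cap M|$ and $d_m=|O(m)\cap P|$, one expresses $N_{PPP},N_{MMM}$ through $\sum_p\binom{k-c_p}{2}$ and $\sum_m\binom{k-1-d_m}{2}$, and $N_{PPM},N_{PMM}$ through the corresponding mixed sums. The obstacle is that each of these counts separately involves the unknown second moments $\sum_p c_p^2$ and $\sum_m d_m^2$; the crux is that in the particular combination above these second-moment terms cancel, and what survives is a polynomial in $k$ that vanishes identically once $\sum_p c_p=\binom{k+1}{2}$ and $\sum_m d_m=\binom{k}{2}$ are inserted. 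This yields $\lambda_A=\lambda_B$, and together with $\lambda_A+\lambda_B=2t+2$ we obtain $\lambda_A=\lambda_B=t+1$, as claimed (the degenerate case $k=1$, i.e.\ $t=0$, being excluded).
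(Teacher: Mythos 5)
Your proposal is correct, and while its first stages coincide with the paper's proof, its final step takes a genuinely different route. The shared skeleton: the class sizes, the balance relation $\binom{k+1}{2}\lambda_C=\binom{k}{2}\lambda_D$ (forced by every mixed $3$-cycle carrying exactly one Class-$C$ and one Class-$D$ arc; this is the paper's Observation \ref{obs:ClassC=ClassD}, and your version with $\binom{k}{2}$ is the correct one --- the paper twice misprints $\binom{k-1}{2}$ but computes with $\binom{k}{2}$), a single global count (your relation $(\mathrm{I})$ on $3$-cycles is an equivalent reformulation of the paper's $2$-path count), and the divisibility analysis giving $k$ odd, $\lambda_C=t$, $\lambda_D=t+1$, $\lambda_A+\lambda_B=2t+2$. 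Where you diverge is the hardest step: the paper never proves $\lambda_A=\lambda_B$ directly; instead it counts $2$-paths joining two $(k,k-1)$-vertices, bounds them by $t^2(2t+1)+t(t+1)(2t+1)=t(2t+1)^2$ using $d^+_{T_1}(u)d^-_{T_1}(u)\le t^2$ and the analogous bound through $T_2$, deduces the inequalities $\lambda_A\le t+1$ and $\lambda_B\le t+1$, and forces equality from the sum. You instead prove the exact identity $3(N_{PPP}-N_{MMM})+(N_{PPM}-N_{PMM})=0$ in an arbitrary almost regular tournament, and your claimed second-moment cancellation does occur: writing $S_c=\sum_p c_p$, $Q_c=\sum_p c_p^2$, $S_d=\sum_m d_m$, $Q_d=\sum_m d_m^2$ and expanding the four cycle counts via transitive-triple counts, the coefficients of $Q_c$ and $Q_d$ vanish and one is left with
\[
3(N_{PPP}-N_{MMM})+(N_{PPM}-N_{PMM})=2(k-1)S_c-2(k-2)S_d-3k(k-1),
\]
which is $k(k-1)\bigl((k+1)-(k-2)-3\bigr)=0$ after substituting $S_c=\binom{k+1}{2}$ and $S_d=\binom{k}{2}$; so the step you left as a claim is indeed a polynomial identity. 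Comparing the two routes: yours yields a structural fact of independent interest (in every almost regular tournament the Class-$A$ arcs and the Class-$B$ arcs have equal total $3$-cyclic-index), uses no inequalities, and also treats the degenerate solution $\lambda_C=\lambda_D=0$ explicitly, which the paper glosses over (it passes from $\gcd(k-1,k+1)=1$ directly to $\lambda_D\ge k+1$, leaving $\lambda_D=0$ excluded only implicitly by the Class-$D$ row of Table \ref{tbl:neighbors}, which requires $\lambda_D-2\ge 0$); the paper's route is computationally lighter, avoids the second-moment bookkeeping, and produces the explicit extremal bounds $\lambda_A,\lambda_B\le t+1$ along the way.
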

\begin{proof}
Let $T$, $\lambda_A$, $\lambda_B$, $\lambda_C$ and $\lambda_D$ be as stated in the theorem.
By Lemma \ref{lem:ClassC=ClassD},
the sum of $3$-cyclic-index of arcs in Class $C$ equals that of arc in Class $D$, that is:
\begin{equation}\label{eqn:lambdaClambdaD}
\begin{split}
{k+1 \choose 2}\lambda_C&={k \choose 2}\lambda_D, \\
 \text{and hence }\frac{\lambda_C}{\lambda_D}&=\frac{k-1}{k+1}.
\end{split}
\end{equation}
If $k$ is even then $\gcd(k-1,k+1)=1$, therefore $\lambda_D\ge k+1$.
However, $\lambda_D$ is the number of the intersection of the in-neighborhood of a certain vertex
and the out-neighborhood of another vertex,
and cannot exceed the maximal semi-degree, thus $\lambda_D\le k$, a contradiction.
Therefore $k$ must be odd. Assume that $k=2t+1$ then $n=2k=4t+2$.

Now we have
$$\frac{\lambda_C}{\lambda_D}=\frac{k-1}{k+1}=\frac{t}{t+1}.$$
Since $\lambda_D \le k=2t+1$, the only integer solution to this equality that
is reasonable is
$\lambda_C = t$ and $\lambda_D=t+1$.

Now let's count the number of $2$-paths in $T$.

Every vertex $u \in V(T)$ satisfies that $\{d^+(u), d^-(u)\} = \{k, k-1\}$, so
the number of $2$-paths with $u$ as the intermediate vertex is $k(k-1)$.
Since $|V(T)|=n=2k$, there are totally $2k^2(k-1)$ $2$-paths in $T$.

On the other hand, the number of all kinds of $2$-paths are listed
in the last two columns of Table \ref{tbl:neighbors}.
Thus we can calculate the number of $2$-paths in $T$ by summing up
the last two column for all arcs $uv$, so

\begin{equation} \label{eqn:numberoftwopaths}
\begin{split}
2k^2(k-1)&= {k \choose 2} (2\lambda_A-1) + {k \choose 2} (2\lambda_B-1) +
 {k+1 \choose 2} (2\lambda_C) +  {k \choose 2} (2\lambda_D-2). \\
\end{split}
\end{equation}

Substituting $\lambda_C=t$ and $\lambda_D=t+1$ into (\ref{eqn:numberoftwopaths}) we get
\begin{equation} \label{eqn:lambdaAlambdaB}
\lambda_A+\lambda_B=2t+2.
\end{equation}

By Lemma \ref{lem:ABlessthant}, we have $\lambda_A \le t+1$ and $\lambda_B \le t+1$.
And by (\ref{eqn:lambdaAlambdaB}), equalities must hold, so we have
$$\lambda_A=\lambda_B=t+1.$$
This completes the proof of the theorem.
\end{proof}

It is easy to verify that the tournaments constructed below are as described in Theorem \ref{thm:4t+2}.
\begin{exmp} \label{exmp:4t+2}
Suppose $T$ is a homogeneous tournament with $4t+3$ vertices.
Arbitrarily remove a vertex $x$ from $T$, and obtain a new tournament $T'$.
Then $T'$ is a tournament of order $4t+2$ and each arc of $T$ is has $3$-cyclic-index $t$ or $t+1$.
\end{exmp}

Based on the above result and example, we extend the definition of near-homogeneous tournaments to those with
$4t+2$ vertices.
\begin{defn} \label{defn:4t+2}
A tournament with order $4t+2$, $t\ge 1$ is near-homogeneous if it is almost regular,
and every arc of it has $3$-cyclic-index $t$ or $t+1$.
\end{defn}

Next we handle the case that $n \equiv 0 \pmod 4$.

\subsection{Tournament of Order $\boldsymbol{4t}$}
$\newline$

We first show that the definition of near-homogeneity can not be extended to tournaments
with $n=4t$ ($t\ge 1$) vertices.
\begin{thm}
There does not exist a tournament on $4t$ vertices in which
the $3$-cyclic-index of every arc is $t$ or $t+1$.
\end{thm}
\begin{proof}
Let $T$ be an almost regular tournament with $n=2k=4t$ vertices,
and assume that every arc of $T$ is contained in $t$ or $t+1$ 3-cycles.
Further assume that $m_{C1}$ arcs in Class $C$ have $3$-cyclic-index $t$,
$m_{C2}$ arcs in Class $C$ have $3$-cyclic-index $t+1$,
$m_{D1}$ arcs in Class $D$ have $3$-cyclic-index $t$,
and $m_{D2}$ arcs in Class $D$ have $3$-cyclic-index $t+1$, with $m_{C1}, m_{C2},m_{D1},m_{D2} \geq 0$.

By Lemma \ref{lem:ClassC=ClassD}, we have
\begin{equation} \label{eqn:CequaltoD}
m_{C1} t + m_{C2} (t+1) = m_{D1} t + m_{D2} (t+1),
\end{equation}
in which
\begin{equation} \label{eqn:C1+C2,D1+D2}
m_{C1}+m_{C2}={k+1 \choose 2}, \text{ and }m_{D1}+m_{D2}={k \choose 2}.
\end{equation}

Substituting (\ref{eqn:C1+C2,D1+D2}) into (\ref{eqn:CequaltoD}) we get
$$m_{D2} - m_{C2} = {k+1 \choose 2}t - {k \choose 2}t = 2t^2 > {k \choose 2}.$$

However $m_{D2} \leq {k \choose 2}$, and thus we have $m_{C2} <0$, a contradiction.
\end{proof}

Therefore current definition of near-homogeneous tournament cannot be extended to tournaments with $4t$ vertices.

However, if we replace $t$ and $t+1$ by other values in equation (\ref{eqn:CequaltoD}), the above contradiction
 may be avoided and there can be reasonable solution for (\ref{eqn:CequaltoD}) and (\ref{eqn:C1+C2,D1+D2}).
 Thus, we do not eliminate the possibility that the $3$-cyclic-index of the
 arcs takes two constants throughout the tournament. Therefore we have the following problem.
\begin{prob} \label{prob:TwoValues}
Does there exist a tournament with $4t\ge 16$ vertices in which the $3$-cyclic-index of all arcs takes only two values?
\end{prob}
The reason that we impose the restriction $4t\ge 16$ is that we have used a computer program to check all
almost-regular tournament with $8$ or $12$ vertices, accessed from the Combinatorial Data of Professor Brendan Mckay (\cite{McKay}),
and did not find any of them in which the $3$-cyclic-index of the arcs takes
only two values.
We also have the following properties of the $3$-cyclic-index in tournament of order $n=4t$,
which may be helpful to seek for the answer of Problem \ref{prob:TwoValues}.
\begin{thm} \label{thm:CcannotConstant}
Let $T$ be an almost regular tournament with $n=2k=4t$ vertices.

\noindent (i) The arcs in Class $C$ and the arcs in Class $D$ cannot both have constant $3$-cyclic-index;

\noindent (ii) If the arcs in Class $A$ and Class $B$
have constant $3$-cyclic-index respectively, then the arcs in Class $C$ cannot have
constant $3$-cyclic-index.
\end{thm}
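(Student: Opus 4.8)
The plan is to derive both impossibilities from a single structural input, Observation~\ref{obs:ClassC=ClassD}, which forces the total $3$-cyclic-index carried by Class $C$ to equal that carried by Class $D$, together with the arithmetic peculiarity of $n=4t$: here $k=2t$, so $k-1=2t-1$ and $k+1=2t+1$ are consecutive odd integers and hence coprime. I will use the arc counts recorded earlier in this section (Classes $A$ and $B$ have $\binom{k}{2}$ arcs each, Class $C$ has $\binom{k+1}{2}$, Class $D$ has $\binom{k}{2}$) and the per-arc $2$-path data in Table~\ref{tbl:neighbors}; these are all I expect to need.

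For part (i), I would assume both $\lambda_C$ and $\lambda_D$ are constant and read off from Observation~\ref{obs:ClassC=ClassD} the identity $\binom{k+1}{2}\lambda_C=\binom{k}{2}\lambda_D$, which simplifies to $(2t+1)\lambda_C=(2t-1)\lambda_D$. Since $\gcd(2t-1,2t+1)=1$, this forces $(2t+1)\mid\lambda_D$. Table~\ref{tbl:neighbors} shows a Class $D$ arc has $\lambda_D-2$ internal vertices of the form $u\to w\to v$, so $\lambda_D\ge 2>0$ and therefore $\lambda_D\ge 2t+1$. But $\lambda_D$ counts the $w$ with $v\to w\to u$, a set inside $O(v)\cap I(u)$, whence $\lambda_D\le k=2t$. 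The inequalities $2t+1\le\lambda_D\le 2t$ are contradictory, proving (i).

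For part (ii), assume $\lambda_A$, $\lambda_B$, $\lambda_C$ are all constant while making \emph{no} assumption on Class $D$. I would count all $2$-paths of $T$ in two ways. Every vertex is the midpoint of exactly $k(k-1)$ of them, giving $2k^2(k-1)$ in total. Grouping instead by the arc joining the two endpoints and using the last two columns of Table~\ref{tbl:neighbors} (a Class $A$, $B$, $C$, $D$ arc contributes $2\lambda_A-1$, $2\lambda_B-1$, $2\lambda_C$, $2\lambda_D-2$ respectively, the last term summed over the varying Class $D$ indices), then replacing the total of the Class $D$ indices by $\binom{k+1}{2}\lambda_C$ via Observation~\ref{obs:ClassC=ClassD}, yields after dividing by $k$ the relation $2k(k-1)=(k-1)(\lambda_A+\lambda_B-2)+2(k+1)\lambda_C$. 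With $k=2t$ this solves to
\[
\lambda_C=\frac{(2t-1)\bigl(4t+2-\lambda_A-\lambda_B\bigr)}{2(2t+1)}.
\]
Because $\gcd(2t-1,\,4t+2)=1$, integrality of $\lambda_C$ forces $(4t+2)\mid(4t+2-\lambda_A-\lambda_B)$. Finally the crude degree/positivity bounds from Table~\ref{tbl:neighbors}, namely $1\le\lambda_A,\lambda_B\le 2t-1$, confine $4t+2-\lambda_A-\lambda_B$ to the range $[4,4t]$, hence strictly inside $(0,4t+2)$, which contains no nonzero multiple of $4t+2$ — the desired contradiction.

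The routine-but-delicate step is the two-way $2$-path count in (ii): one must be careful that Class $D$ need not be homogeneous, so its contribution enters only through the sum of its indices, which is exactly what Observation~\ref{obs:ClassC=ClassD} lets me eliminate. The genuine heart of the argument, and the only place the hypothesis $n\equiv 0\pmod 4$ is used, is the coprimality of $2t-1$ and $2t+1$; this is precisely the feature that fails when $n\equiv 2\pmod 4$ (where $k$ is odd), mirroring the dichotomy already seen in the preceding theorem. I also note that, unlike the approach hinted at in the surrounding discussion, the sharper estimates $\tfrac{2t+1}{3}\le\lambda_A,\lambda_B\le\tfrac{2t^2-1}{2t-1}$ are not actually needed: the trivial bounds $1\le\lambda_A,\lambda_B\le 2t-1$ already force the relevant quantity into the forbidden interval.
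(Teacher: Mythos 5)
Your proof is correct. Part (i) is essentially the paper's own argument: the relation $\binom{k+1}{2}\lambda_C=\binom{k}{2}\lambda_D$ from Observation \ref{obs:ClassC=ClassD}, coprimality of $k-1$ and $k+1$ when $k$ is even, and the semi-degree cap $\lambda_D\le k$. You are in fact slightly more careful than the paper, which only cites ``the discussion following'' its earlier equation and never rules out $\lambda_D=0$; your appeal to the Table \ref{tbl:neighbors} entry $\lambda_D-2\ge 0$ closes that loophole.

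For part (ii), you and the paper derive the identical key identity $2(2t+1)\lambda_C=(2t-1)(4t+2-\lambda_A-\lambda_B)$ in the same way: the two-way count of $2$-paths, with the (possibly non-constant) Class $D$ contribution entering only through its total, which Observation \ref{obs:ClassC=ClassD} replaces by $\binom{k+1}{2}\lambda_C$. The endgames differ. The paper applies coprimality to conclude $(2t-1)\mid\lambda_C$, asserts $\lambda_C=2t-1$ (silently discarding $\lambda_C=0$, a case that needs the extra remark that it would force $\lambda_A+\lambda_B=4t+2$, exceeding what the semi-degrees allow), and then uses Observation \ref{obs:ClassC=ClassD} a second time, comparing $\binom{k+1}{2}\lambda_C$ with $\binom{k}{2}$ times the largest Class $D$ index to force that index to be at least $k+1>k$. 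You instead push the divisibility onto the other factor, obtaining $(4t+2)\mid(4t+2-\lambda_A-\lambda_B)$, and finish with the trivial positivity/degree bounds $1\le\lambda_A,\lambda_B\le 2t-1$ from Table \ref{tbl:neighbors}, which confine $4t+2-\lambda_A-\lambda_B$ to $[4,4t]$, an interval containing no multiple of $4t+2$. Your version buys a shorter and more self-contained conclusion: it invokes the Observation only once, never needs to pin down $\lambda_C$, requires no statement about Class $D$ values at all, and has no unexamined degenerate case. What it forgoes is the explicit forced value $\lambda_C=2t-1$, which the paper's route exhibits but which is irrelevant to the theorem. Both arguments correctly locate the role of $n\equiv 0\pmod 4$ in the coprimality of $2t-1$ and $2t+1$.
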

\begin{proof}
If the $3$-cyclic-index remains constants among arcs in Class $C$ and among arcs in Class $D$ respectively,
then from (\ref{eqn:lambdaClambdaD}) and the discussion following it we lead to a contradiction.
Therefore (i) holds.

Now we prove (ii).
Assume contrarily the $3$-cyclic-index of arcs in Class $A$, Class $B$ and Class $C$ remain a constant, respectively.
We use the former notes $\lambda_A$, $\lambda_B$ and $\lambda_C$ to denote these constants. Further we let
the set $\Lambda_D=\{\lambda_{1}, \lambda_{2}, \dots, \lambda_{r}\}$ be all possible $3$-cyclic-index
of arcs in Class $D$, where $m_i$ arcs in Class $D$ is of $3$-cyclic-index $\lambda_{i}$ for $1\le i \le r$,
and $\sum_{i=1}^r m_{i}={k \choose 2}$.

Now equation (\ref{eqn:numberoftwopaths}) becomes
\begin{equation} \label{eqn:numberoftwopathsManyD}
\begin{split}
2k^2(k-1)&= {k \choose 2} (2\lambda_A-1) + {k \choose 2} (2\lambda_B-1) +
 {k+1 \choose 2} (2\lambda_C) +  \sum_{i=1}^{r} m_i (2\lambda_i-2).
\end{split}
\end{equation}

On the other hand, from Lemma \ref{lem:ClassC=ClassD}, we have
\begin{equation}\label{eqn:lambdaClambdaDManyD}
\begin{split}
{k+1 \choose 2}\lambda_C&= \sum_{i=1}^{r} m_i \lambda_i.
\end{split}
\end{equation}

Substituting (\ref{eqn:lambdaClambdaDManyD}) into (\ref{eqn:numberoftwopathsManyD}),
we have
\begin{equation}  \nonumber
\begin{split}
2k^2(k-1)&= {k \choose 2} (2\lambda_A-1) + {k \choose 2} (2\lambda_B-1) +
 {k+1 \choose 2} (2\lambda_C) +  \sum_{i=1}^{r} m_i (2\lambda_i-2). \\
 &= {k \choose 2} (2\lambda_A-1) + {k \choose 2} (2\lambda_B-1) +
 2{k+1 \choose 2} \lambda_C + 2\sum_{i=1}^{r} m_i \lambda_i -2\sum_{i=1}^{r} m_i \\
 &= {k \choose 2} (2\lambda_A-1) + {k \choose 2} (2\lambda_B-1) +
 2{k+1 \choose 2} \lambda_C + 2{k+1 \choose 2} \lambda_C - 2{k \choose 2},
\end{split}
\end{equation}
from which we get
$$2(2t+1)\lambda_C = (2t-1)(4t-\lambda_A-\lambda_B+2)$$
However $\gcd(2(2t+1),2t-1)=1$, thus $\lambda_C$ must be a multiple of $2t-1$.
As $\lambda_C$ cannot exceed the maximal semi-degree, we have $\lambda_C \le k=2t$,
and so $\lambda_C = 2t-1$. Without lose of generality we assume $\lambda_1 = \max \Lambda_D$.
Then we have
$${k+1 \choose 2}\lambda_C= \sum_{i=1}^{r} m_i \lambda_i \le \sum_{i=1}^{r} m_i \lambda_1={k \choose 2} \lambda_1,$$
and that $\lambda_1 \ge k+1$, contradicting that $\lambda_1$ cannot exceed the maximal semi-degree $k$.
The theorem is then proved by this contradiction.
\end{proof}

\section{Path extendability of near-homogeneous tournaments}
In this section, we verify the path extendability of near-homogeneous tournaments.

Let $P=u_0 u_1 \cdots u_{p-1}$ be a non-extendable path of a tournament $T$, and let $v \in V(T) \backslash V(P)$.
There can not exist $0 \leq i \textless j \leq p-1$ such that $u_i \rightarrow v \rightarrow u_j$, or
there must exists $i \le l \le j-1$ such that $u_l \rightarrow v \rightarrow u_{l+1}$ and then
$P'=u_0 \cdots u_lvu_{l+1}u_{p-1}$ extends $P$.
Therefore, according to the direction of the arcs between $V(P)$ and $v$, we can classify $v$ into three categories as follow.

(1) $v \rightarrow V(P)$,

(2) $V(P) \rightarrow v$, and

(3) there exists an integer $s$ such that $v \rightarrow u_i$ for all $0 \leq i \leq s-1$, and $u_i \rightarrow v$ for all $s \leq i \leq p-1$.

We call a vertex $v$ a \textbf{dominating}, \textbf{dominated} or \textbf{hybrid vertex} of $P$ if $v$ belongs to category (1), (2) or (3), respectively.
Furthermore, for a hybrid vertex $v$ in category (3), we say that it \textbf{switches at $s$}.

For a path $P$ and $x,y \in V(P)$, we use $P[x,y]$ to denote the segment of $P$ from $x$ to $y$. Also note that
the notation of \textbf{dominating}, \textbf{dominated} and \textbf{hybrid vertex} is applicable for $\boldsymbol{P[x,y]}$
for a non-extendable path $P=u_0 u_1 \cdots u_{p-1}$.
Firstly, we have the following lemma which limits the number of hybrid vertices of $P[u_1,u_{p-2}]$.

\begin{lem} \label{Lem:NumberHybridV}
Let $P=u_0 u_1 \cdots u_{p-1}$ be a non-extendable path of length $p \geq 4$
in a near-homogeneous tournament $T$ with $n=4t+1$ or $n=4t+2$, and
$P^{\prime} = P[u_1,u_{p-2}]$, then one of the following holds.

(i) $P'$ has no hybrid vertex;

(ii) $P$, and hence $P'$ has at most $(n+1)-(d^+(u_0)+d^-(u_{p-1})) \le i(T) +2$ hybrid vertices.
\end{lem}
\begin{proof}
Let $F = T - V(P)$ and $q = \lvert F \rvert$. Suppose (i) does not hold, then $P'$ has a hybrid vertex $v$,
let's prove (ii).

Note that $v$ is also a hybrid vertex of $P$.
Let the number of hybrid vertices of $P$ be $h \geq 1$.
For every hybrid vertex $w$ of $P$, $u_{p-1} \rightarrow w \rightarrow u_0$,
while the other vertices in $V(T)\backslash V(P)$ dominate $V(P)$ or are dominated by $V(P)$,
so we have $N^{+}_{F}(u_0) \subset N^{+}_{F}(u_{p-1})$ and $h = d_{F}^{+}(u_{p-1}) - d_{F}^{+}(u_0)$.

Since $p \geq 4$, $u_1 \neq u_{p-2}$, and $u_{p-2} \rightarrow v \rightarrow u_1$.
For $1 \leq i \leq p-3$, at most one of $u_0 \rightarrow u_{i+1}$ and $u_i \rightarrow u_{p-1}$ holds, otherwise there exists a path $u_0 u_{i+1} \cdots u_{p-2} v u_1 \cdots u_i u_{p-1}$ which extends $P$, a contradiction. Hence,
$$d_{P}^{+}(u_0) + d_{P}^{-}(u_{p-1}) \leq (p-3) + 1 + 1 +2 = p+1.$$
Since $T$ is a tournament,
\begin{equation} \label{eqn:d+u0d-u(p-1)inF}
\begin{split}
d_{F}^{+}(u_0) + d_{F}^{-}(u_{p-1})
&= d^+(u_0)+d^-(u_{p-1}) - d_{P}^{+}(u_0) - d_{P}^{-}(u_{p-1}) \\
&\geq  d^+(u_0)+d^-(u_{p-1}) - (p+1),
\end{split}
\end{equation}
and
\begin{equation} \label{eqn:degreeu(p-1)inF}
d_{F}^{+}(u_{p-1}) + d_{F}^{-}(u_{p-1}) = q.
\end{equation}
By (\ref{eqn:d+u0d-u(p-1)inF}) and (\ref{eqn:degreeu(p-1)inF}),
\begin{equation}
\begin{split}
h &= d_{F}^{+}(u_{p-1}) - d_{F}^{+}(u_0) \\
  &= (d_{F}^{+}(u_{p-1}) + d_{F}^{-}(u_{p-1}))-(d_{F}^{+}(u_0)+d_{F}^{-}(u_{p-1})) \\
  &\le q - (d^+(u_0)+d^-(u_{p-1}) - (p+1)) \\
  &=(n+1)-(d^+(u_0)+d^-(u_{p-1})),
\end{split}
\end{equation}
which is the first statement in (ii).

To prove the final inequality in (ii) we discuss the following two cases.

Case 1. $n=4t+1$.

Now $T$ is regular, so $d^+(u_0)=d^-(u_{p-1})=(n-1)/2$ and $i(T) = 0$,

So we have
$$(n+1)-(d^+(u_0)+d^-(u_{p-1}))=2 =i(T)+ 2.$$

Case 2. $n=2k=4t+2$.

Now $T$ is almost regular, and each vertex of $T$ is either a $(k,k-1)$-vertex or a $(k-1,k)$-vertex, and $i(T) = 1$.
So $d^+(u_0)\ge k-1$ and $d^-(u_{p-1}) \ge k-1$.
Therefore
$$(n+1)-(d^+(u_0)+d^-(u_{p-1}))\le (n+1)-2(k-1)=3 =i(T)+ 2.$$

This completes the proof of the lemma.
\end{proof}

To prove the path extendability of near-homogeneous tournaments we also need the following result: a digraph is \textbf{arc-3-cyclic} (respectively \textbf{arc-3-anticyclic}), if for every arc $uv$ there is a $(v,u)$-2-path (respectively $(u,v)$-2-path).
We say that a digraph with $n$ vertices is \textbf{completely strong path-connected}, if for every vertex pair $\{ u,v \}$, there are $\{ u,v \}$-paths and $\{ v,u \}$-paths of every length from 2 to $n-1$.

The following theorem is shown in \cite{ZhangK1982}.

\begin{thm} \cite{ZhangK1982} \label{ZhangK}
Let $T$ be a tournament which is arc-3-cyclic and arc-3-anticyclic. Then $T$ is completely strong path-connected, unless $T$ belongs to one class of counterexamples that can be characterized.
\end{thm}

\begin{thm} \label{Thm:PathExt4t+1}
All near-homogeneous tournaments with $n=4t+1 \geq 9$ are path extendable.
\end{thm}
\begin{proof}
Let $T$ be a near-homogeneous tournament with $n=4t+1 \geq 9$ vertices.
Suppose to the contrary that $T$ has a non-extendable path $P=u_0 u_1 \cdots u_{p-1}$.
For $n=9$, we use a computer program to verify that $T$ is path extendable.
The code of the program is listed in Appendix \ref{sec:Program}.
Therefore we may assume $t\ge 3$ and $n \geq 13$.

By Table \ref{tbl:neighbors},
for every arc $uv$ there are at least $t-2 \ge 1$ $\{u,v\}$-$2$-paths, so every arc in $T$ is extendable.
By Theorem \ref{ZhangK}, there is a $(u,v)$-Hamiltonian path in $T$ for every vertex pair $\{ u,v \}$,
therefore every path of $n-1$ vertices is also extendable.
When $p=2$ and $P = u_0 u_1 u_2$, since $u_0 u_1$ can be extended to a path $u_0 x u_1$ and $x \neq u_2$, $P$ can be extended to the path $u_0 x u_1 u_2$, a contradiction. Similarly, when $p=3$, $P$ is extendable.
So we may assume that $p \geq 4$ and $p\le n-2$.

Let $N^{-}$, $N^{+}$ and $N_h$ be the sets of dominating vertices, dominated vertices and hybrid vertices of $P^{\prime} = P[u_1,u_{p-2}]$, and $p^{\prime} = \lvert P^{\prime} \rvert$. Then, $N^{-} \rightarrow u_0$.

Consider the set $S$ of 2-paths in $T$ between the vertices in $V(P^{\prime})$.
Let $w$ be an intermediate vertex of any 2-path in $S$.
Then, $w \in V(P^{\prime}) \cup \{u_0,u_{p-1}\} \cup N_h$.
By Lemma \ref{Lem:NumberHybridV}, $\lvert N_h \rvert \leq i(T) + 2 = 2$.
For $w \in \{u_0,u_{p-1}\} \cup N_h$, the number of 2-paths in $S$ with intermediate vertex $w$ is at most $(p^{\prime} / 2)^2$. For $w \in V(P^{\prime})$, the number of 2-paths in $S$ with intermediate vertex $w$ is at most $((p^{\prime} -1) / 2)^2$. Therefore,
\begin{equation} \label{eqn:UpperboundofS}
\lvert S \rvert \leq 4 \cdot (\frac{p^{\prime}}{2})^2 + p^{\prime} \cdot (\frac{p^{\prime} -1}{2})^2 = \frac{p^{\prime}(p^{\prime} +1)^2}{4}.
\end{equation}

By Table \ref{tbl:neighbors},
for any arc $uv$ of $T$, if $uv$ is contained in $t$ $3$-cycles,
we have $t + t-1 = (n-3)/2$ 2-paths between $u$ and $v$ (of both direction),
and if $uv$ is contained in $t+1$ $3$-cycles,
we have $t + t+1 = (n+1)/2$ 2-paths between $u$ and $v$.
Let $n_1$, $n_2$ be the number of arcs in $\langle P^{\prime} \rangle$ of the above two categories, respectively,
with $n_1 + n_2 = p^{\prime}(p^{\prime} -1)/2$. Then,
\begin{equation} \label{eqn:LowerboundofS}
\lvert S \rvert = n_1 \cdot \frac{n-3}{2} + n_2 \cdot \frac{n+1}{2}
\geq \frac{n-3}{2} \cdot \frac{p^{\prime}(p^{\prime}-1)}{2}
\end{equation}

By (\ref{eqn:UpperboundofS}) and (\ref{eqn:LowerboundofS}), we have
$$\frac{n-3}{2} \cdot \frac{p^{\prime}(p^{\prime}-1)}{2} \leq \frac{p^{\prime}(p^{\prime} +1)^2}{4},$$
that is,
\begin{equation} \label{eqn:SolutionofP'}
{p^{\prime}}^2 - (n-5)p^{\prime} + n-2 \geq 0.
\end{equation}

Since $n \geq 13$, solving $p^{\prime}$ from (\ref{eqn:SolutionofP'}), we have
$$p^{\prime} \geq (n-5 + \sqrt{(n-5)^2-4(n-2)})/2 > (n-5+(n-9))/2 = n-7.$$
Therefore, $p^{\prime} \geq n-6$, and $p \geq n-4$.

Thus $n-4 \le p \le n-2$.
Let $s=n-p$. Since $n \geq 13$ and $T$ is regular, every vertex of $V(T) \backslash V(P)$ must be a
hybrid vertex of $P$, or it can not have balanced in-degrees and out-degree.

Suppose that $3 \leq s \leq 4$, and there are at least 3 vertices outside $P$.
By Lemme \ref{Lem:NumberHybridV}, $P^{\prime}$ has at most $2$ hybrid vertices.
So there exists a vertex $v_i \in V(T) \backslash V(P) (0 \leq i \leq s-1)$, such that $v_i$ is a hybrid vertex of $P$ but not a hybrid vertex of $P^{\prime}$.
Without lose of generality, we assume that $\{ u_1,u_2, \cdots u_{p-1} \} \rightarrow v_i$ and $v_i \rightarrow u_0$.
Then we have $d^{-}(v_i) \geq q-1 \geq n-5 > (n-1)/2$, which contradicts to $d^-(v_i)=(n-1)/2$.

So we have $s=2$.
Let $v_0$ and $v_1$ denote the vertices belong to $V(T) \backslash V(P)$.
Suppose that $v_0$ switches at $u_{r_0}$, $v_1$ switches at $u_{r_1}$.
Without lose of generality, we assume that $r_0 \leq r_1$.
But then there is no $2$-path from $v_0$ to $v_1$, a contradiction.

This completes the proof of the theorem.
\end{proof}

\begin{thm} \label{Thm:PathExt4t+2}
All near-homogeneous tournaments with $n=4t+2 \geq 10$ are path extendable.
\end{thm}
\begin{proof}
Let $T$ be a near-homogeneous tournament with $n=2k=4t+2 \geq 10$ vertices.
Suppose to the contrary that $T$ has a non-extendable path $P=u_0 u_1 \cdots u_{p-1}$.
For $n=10$, similar to the proof of the last theorem, we use the program in Appendix \ref{sec:Program}
to verify that $T$ is path extendable.
Therefore we may assume $t \geq 3$ and $n \geq 14$.
By analog arguments in the proof of the last theorem, we may assume that $p \geq 4$ and $p\le n-2$.

Let $P'$, $p'$, $N^{-}$, $N^{+}$ and $N_h$ be defined in the same way as in Theorem 4.3.
Again we consider the set of 2-paths in $T$ between the vertices in $V(P^{\prime})$, denoted by $S$. Let $w$ be an intermediate vertex of any 2-path in $S$.
Then, $w \in V(P^{\prime}) \cup \{u_0,u_{p-1}\} \cup N_h$.
For $w \in \{u_0,u_{p-1}\} \cup N_h$, the number of 2-paths in $S$ with intermediate vertex $w$ is at most $(p^{\prime} / 2)^2$. For $w \in V(P^{\prime})$, the number of 2-paths in $S$ with intermediate vertex $w$ is at most $((p^{\prime} -1) / 2)^2$.

By Lemma \ref{Lem:NumberHybridV}, $|N_h|\le (n+1)-(d^+(u_0)+d^-(u_{p-1}))=2k+1-(d^+(u_0)+d^-(u_{p-1}))$.
We discuss the following cases according to the degrees of $u_0$ and $u_{p-1}$.

Case 1. $u_0$ and $u_{p-1}$ are both $(k,k-1)$-vertices or both $(k-1,k)$-vertices.

Now $\lvert N_h \rvert \le 2k+1-(d^+(u_0)+d^-(u_{p-1})) =2$, therefore
\begin{equation} \label{eqn:Upperboundcase1}
\lvert S \rvert \leq 4 \cdot (\frac{p^{\prime}}{2})^2 + p^{\prime} \cdot (\frac{p^{\prime} -1}{2})^2 = \frac{p^{\prime}(p^{\prime} +1)^2}{4}.
\end{equation}

We have $2t+1=n/2$ $2$-paths between $u$ and $v$ for every arc $uv$ in Class $A$,
$2t+1=n/2$ $2$-paths between $u$ and $v$ for every arc $uv$ in Class $B$,
$2t=(n-2)/2$ $2$-paths between $u$ and $v$ for every arc $uv$ in Class $C$,
and $2t=(n-2)/2$ $2$-paths between $u$ and $v$ for every arc $uv$ in Class $D$.
Let $n_A$, $n_B$, $n_C$ and $n_D$ be the number of arcs in $\langle P^{\prime} \rangle$ of four classes, respectively,
with $n_A + n_B + n_C + n_D = p^{\prime}(p^{\prime} -1)/2$. Then,
\begin{equation} \label{eqn:Lowerboundcase1}
\lvert S \rvert = \frac{n}{2} \cdot n_A + \frac{n}{2} \cdot n_B + \frac{n-2}{2} \cdot n_C + \frac{n-2}{2} \cdot n_D \geq (\frac{n}{2} - 1) \cdot \frac{p^{\prime}(p^{\prime}-1)}{2}
\end{equation}

By (\ref{eqn:Upperboundcase1}) and (\ref{eqn:Lowerboundcase1}), we have
$$\frac{n-2}{2} \cdot \frac{p^{\prime}(p^{\prime}-1)}{2} \leq \frac{p^{\prime}(p^{\prime} +1)^2}{4},$$
that is,
\begin{equation} \label{eqn:Solutioncase1}
{p^{\prime}}^2 - (n-4)p^{\prime} + n-1 \geq 0.
\end{equation}

Since $n \geq 14$, solving $p^{\prime}$ from (\ref{eqn:Solutioncase1}), we have
$$p^{\prime} \geq (n-4 + \sqrt{(n-4)^2 - 4(n-1)})/2 > (n-4 + (n-15/2)/2 = n - 23/4.$$
Since $p^{\prime}$ is an integer, $p^{\prime} \geq n-5$, and $p \geq n-3$.

Similar to the proof of Theorem \ref{Thm:PathExt4t+1}, when $p=n-3$, there exists a vertex
$v_i \in V(T) \backslash V(P) (0 \leq i \leq 2)$ which is a hybrid vertex of $P$
but not a hybrid vertex of $P'$, and $d^{-}(v_i) \geq n-4 > n/2$ or $d^{+}(v_i) \geq n-4 > n/2$,
which contradicts to that $d^+(v_i), d^-(v_i)\in \{n/2, n/2-1\}$.
When $p=n-2$, we can also get a contradiction by using the same method as in the proof of Theorem \ref{Thm:PathExt4t+1}.

Case 2. $u_0$ is a $(k,k-1)$-vertex, $u_{p-1}$ is a $(k-1,k)$-vertex.

Now $\lvert N_h \rvert \leq 2k+1-(d^+(u_0)+d^-(u_{p-1}))=1$, therefore
\begin{equation} \label{eqn:Upperboundcase2}
\lvert S \rvert \leq 3 \cdot (\frac{p^{\prime}}{2})^2 + p^{\prime} \cdot (\frac{p^{\prime} -1}{2})^2 = \frac{p^{\prime}({p^{\prime}}^2+p^{\prime}+1)}{4}.
\end{equation}

Similarly, by (\ref{eqn:Lowerboundcase1}) and (\ref{eqn:Upperboundcase2}), we have
$$\frac{n-2}{2} \cdot \frac{p^{\prime}(p^{\prime}-1)}{2} \leq \frac{p^{\prime}({p^{\prime}}^2+p^{\prime}+1)}{4},$$
that is,
\begin{equation} \label{eqn:Solutioncase2}
{p^{\prime}}^2 - (n-3)p^{\prime} + n-1 \geq 0.
\end{equation}

Since $n \geq 14$, solving $p^{\prime}$ from (\ref{eqn:Solutioncase2}), we have
$$p^{\prime} \geq (n-3 + \sqrt{(n-3)^2 - 4(n-1)})/2 > (n-3 + (n-6))/2 = n - 9/2.$$
Since $p^{\prime}$ is an integer, $p^{\prime} \geq n-4$, and $p \geq n-2$. Thus $p=n-2$.

We can get a contradiction by using the same method as in the proof of Theorem \ref{Thm:PathExt4t+1}.

Case 3. $u_0$ is a $(k-1,k)$-vertex, $u_{p-1}$ is a $(k,k-1)$-vertex.

Now $\lvert N_h \rvert \leq 2k+1-(d^+(u_0)+d^-(u_{p-1}))=3$, therefore
\begin{equation} \label{eqn:Upperboundcase3}
\lvert S \rvert \leq 5 \cdot (\frac{p^{\prime}}{2})^2 + p^{\prime} \cdot (\frac{p^{\prime} -1}{2})^2 = \frac{p^{\prime}({p^{\prime}}^2+3p^{\prime}+1)}{4}.
\end{equation}

Similarly, by (\ref{eqn:Lowerboundcase1}) and (\ref{eqn:Upperboundcase3}), we have
$$\frac{n-2}{2} \cdot \frac{p^{\prime}(p^{\prime}-1)}{2} \leq \frac{p^{\prime}({p^{\prime}}^2+3p^{\prime}+1)}{4},$$
that is,
\begin{equation} \label{eqn:Solutioncase3}
{p^{\prime}}^2 - (n-5)p^{\prime} + n-1 \geq 0.
\end{equation}

Since $n \geq 14$, solving $p^{\prime}$ from (\ref{eqn:Solutioncase3}), we have
$$p^{\prime} \geq (n-5 + \sqrt{(n-5)^2 - 4(n-1)})/2 > (n-5 + (n-9))/2 = n - 7.$$
Therefore, $p^{\prime} \geq n-6$, and $p \geq n-4$.

When $p=n-4$ and $p=n-2$, we get contradictions by using the same methods
as in the proof of Theorem \ref{Thm:PathExt4t+1} again.

Consider the case when $p=n-3$. Note that every vertex of $V(T) \backslash V(P)$ must be a hybrid vertex of $P$
or its in-degree and out-degree can not be balance.
Let $V(T) \backslash V(P)=\{v_1, v_2, v_3\}$,
assume that $v_i$ switches at $u_{r_i}$ for $i\in \{1,2,3\}$,
 and w.l.o.g. assume that $r_1 \leq r_2 \le r_3$.
Then there is no $2$-path from $v_i$ to $v_j$ with an intermediate vertex in $V(P)$ for $1\le i<j \le 3$.
In particular the only $2$-paths in $T$ from $v_1$ to $v_2$ or $v_3$ must be $v_1v_2v_3$ or $v_1v_3v_2$,
hence there is either no $2$-path from $v_1$ to $v_2$ or no $2$-path from $v_1$ to $v_3$.

However, by the last two column of Table \ref{tbl:neighbors} and the definition of near-homogeneous tournaments
there is at least $t-2 \ge 1$ $2$-paths from $u$ to $v$ for any vertex pair $u$ and $v$, a contradiction.

This completes the proof of the theorem.
\end{proof}

\section{Conclusion}
Homogeneous tournaments are highly symmetric tournaments that are path extendable.
Their order must be $4t+3$ for certain integer $t$.
In order to find more path extendable tournaments,
we look for similar symmetry in tournaments of other orders,
by exploring the concept of near-homogeneity in this paper.

While near-homogeneous tournament has been defined for tournaments with order $4t+1$,
we first put forward a new method to construct near-homogeneous tournaments of order $8t+5=4(2t+1)+1$
when a homogeneous tournament of order $4t+3$ is given.
Next, we extend the definition of near-homogeneity to tournaments with even order.
In particular, there exist tournament of order $4t+2$ which satisfy
the original definition of near-homogeneity that each arc is contained in $t$ or $t+1$ $3$-cycles.
However, we prove that no tournament of order $4t$ satisfies the definition.
It remains a problem whether it is possible that
the number of $3$-cycles containing each arc in a tournament of order $4t$ takes only two values.
Finally we verify path extendability of near-homogeneous tournaments of order $4t+1$ and $4t+2$.

While the existence, construction and enumeration of homogeneous tournaments of every
possible order remain challenging problems,
we can find interesting problems concerning near-homogeneous tournaments as well.
As we can see, the existing constructions of near-homogeneous tournaments
are mostly transformations from given homogeneous tournaments
(e.g. the one given in \cite{Moukouelle1998}, Theorem \ref{thm:8t+5} and Example \ref{exmp:4t+2}),
so we are interested in the constructions
of near-homogeneous tournaments that are not based on homogeneous tournaments 
(e.g. the one given in \cite{AstieDugat1992}).
In particular, for near-homogeneous tournaments with an even order, we ask for the
existence of tournaments in this class, besides the ones that we construct in Example
\ref{exmp:4t+2}.

\section*{Acknowledgments}
We would like to thank Qi Zhang for his assistance in preparing our verification programs.

\appendix






\section{Proof of theorem \ref{thm:8t+5}} \label{sec:Proof8t+5}

First, we prove that $H$ is regular. $H$ contains $2(4t+3)-1=8t+5$ vertices.
We verify that the out-neighborhood of every vertex $u$ in $H$ is of order $4t+2$,
thus $d^+(u)=d^-(u)=4t+2$.

For any $u\in O(x),$ the set of out-neighbors of $u$ is $O(u)\cup (O(u^*)-\{x^*\})\cup \{z\}$,
which has order $(2t+1)+2t+1=4t+2$.
For any $u\in I(x),$ the set of out-neighbors of $u$ is $(O(u)-\{x\})\cup O(u^*)\cup \{u^*\}$,
which has order $2t+(2t+1)+1=4t+2$.
For any $u^*\in O(x^*),$ the set of out-neighbors of $u^*$ is $I(u)\cup O(u^*),$
which has order $2(2t+1)=4t+2$.
For any $u^*\in I(x^*),$ the set of out-neighbors of $u^*$ is $O(u^*-\{x^*)\cup (I(u)-x)\cup \{z\}\cup \{u\},$ which has order $2(2t)+1+1=4t+2.$
Finally, the set of out-neighbors of $z$ is $I(x)\cup O(x^*),$ which has order $2(2t+1)=4t+2.$

Next, to prove that $H$ is near-homogeneous, we only need to verify that every pair of vertices
$u$ and $v$ have $2t$ or $2t+1$ common out-neighbors.
For, without lose of generality we may assume that $u \rightarrow v$.
Since $H$ is a tournament and $v$ is of out-degree $4t+2$, the number of $3$-cycles that contains the arc $uv$
is then $4t+2-2t=2t+2$ or $4t+2-(2t+1)=2t+1$.
We will divide our discussion into 14 cases according to the vertex subset that $u$ and $v$ belongs to.
Note that $T^*$ is also homogeneous, and in a homogenous tournament of order $4t+3$,
every two distinct vertices have $t$ common in-neighbors and $t$ common out-neighbors.
Furthermore for any arc $uv$ in $T$, there are $t$ $(u,v)$-paths of length $2$ in $T$, i.e.
$|O(u)\cap I(v)|=t$.
To improve the readability of our proof, we will write it in a relatively formal form and if the vertex
$u$ or $v$ is from $T^*$, we write it as $u^*$ and $v^*$.

Case 1. $v=z$ and $u\in O(x)$.

The out-neighborhood of $v$ in $H$ is $I(x)\cup O(x^*)$,
and the out-neighborhood of $u$ in $H$ is $O(u)\cup (O(u^*)-\{x^*\}) \cup \{z\}$ where $u^*\in I(x^*)$.

$T$ is homogeneous $\Longrightarrow$ $xu$ is contained in $t+1$ $3$-cycles in $T$
$\Longrightarrow$ $|O(u)\cap I(x)|=t+1$.

$T^*$ is homogeneous $\Longrightarrow$ $u^*$ and $x^*$ have $t$ common out-neighbors
$\Longrightarrow$ $|(O(u^*)-\{x^*\})\cap O(x^*)|=t$.

$|O(u)\cap I(x)|=t+1$ and $|(O(u^*)-\{x^*\})\cap O(x^*)|=t$
$\Longrightarrow$ $u$ and $v$ have $t+1+t=2t+1$ common out-neighbors in $H$.

Case 2. $v=z$ and $u\in I(x)$.

The out-neighborhood of $v$ in $H$ is $I(x)\cup O(x^*)$,
and the out-neighborhood of $u$ in $H$ is $(O(u)-\{x\})\cup O(u^*)\cup \{u^*\} $ where $u^*\in O(x^*)$.

$T$ is homogeneous $\Longrightarrow$ $\langle I(x) \rangle$ is regular
$\Longrightarrow$ $|(O(u)-\{x\})\cap I(x)|=t$.

$T^*$ is homogeneous $\Longrightarrow$ $\langle O(x^*) \rangle$ is regular
$\Longrightarrow$ $|O(u^*)\cap O(x^*)|=t$.

$u^*\in O(x^*)$ $\Longrightarrow$ $v \rightarrow u^*$.

$|O(u)\cap I(x)|=t$, $|(O(u^*)-\{x\})\cap O(x^*)|=t$, $u\rightarrow u^*$ and $v\rightarrow u^*$
$\Longrightarrow$ $u$ and $v$ have $2t+1$ common out-neighbors in $H$.

Case 3. $v=z$ and $u^*\in O(x^*)$.

The out-neighborhood of $v$ in $H$ is $I(x)\cup O(x^*)$,
and the out-neighborhood of $u^*$ in $H$ is $I(u)\cup O(u^*)$ where $u\in I(x)$.

$T$ is homogeneous $\Longrightarrow$ $\langle I(x) \rangle$ is regular
$\Longrightarrow$ $|I(u)\cap I(x)|=t$.

$T^*$ is homogeneous $\Longrightarrow$ $\langle O(x^*) \rangle$ is regular
$\Longrightarrow$ $|O(u^*)\cap O(x^*)|=t$.

$|I(u)\cap I(x)|=t$ and $|O(u^*)\cap O(x^*)|=t$ $\Longrightarrow$
$u$ and $v$ have $2t$ common out-neighbors in $H$.

Case 4. $v=z$ and $u^*\in I(x^*):$

The out-neighborhood of $v$ in $H$ is $I(x)\cup O(x^*)$,
and the out-neighborhood of $u^*$ in $H$ is $(O(u^*)-\{x^*\})\cup (I(u)-x)\cup \{z\}\cup \{u\}$ where $u\in O(x)$.

$T$ is homogeneous $\Longrightarrow$ $u$ and $x$ have $t$ common in-neighbors in $T$
$\Longrightarrow$ $|I(x)\cap (I(u)-x)|=t$.

$T^*$ is homogeneous $\Longrightarrow$ $u^*$ and $x^*$ have $t$ common out-neighbors in $T^*$
$\Longrightarrow$ $|(O(u^*)-\{x^*\}) \cap O(x^*)|=t$.

$|I(x)\cap I(u)|=t$ and $|O(u^*) \cap O(x^*)|=t$ $\Longrightarrow$
$u$ and $v$ have $2t$ common out-neighbors in $H$.

Case 5. $u,v\in O(x)$.

The out-neighborhood of $u$ in $H$ is $O(u)\cup (O(u^*)-\{x^*\})\cup \{z\}$ where $u^*\in I(x^*)$,
and the out-neighborhood of $v$ in $H$ is $O(v)\cup (O(v^*)-\{x^*\})\cup \{z\}$ where $v^*\in I(x^*)$.

$T$ is homogeneous $\Longrightarrow$ $u$ and $v$ have $t$ common out-neighbors in $T$
$\Longrightarrow$ $|O(u)\cap O(v)| = t$.

$T^*$ is homogeneous $\Longrightarrow$ $u^*$ and $v^*$ have $t$ common out-neighbors in $T^*$ including
$x^*$ $\Longrightarrow$ $|(O(u^*)-\{x^*\})\cap (O(v^*)-\{x^*\})| = t-1$.

$|O(u)\cap O(v)| = t$, $|(O(u^*)-\{x^*\})\cap (O(v^*)-\{x^*\})| = t-1$ and $z$ is a common out-neighbor of $u$ and $v$
$\Longrightarrow$ $u$ and $v$ have $2t$ common out-neighbors in $H$.

Case 6. $u,v\in I(x)$.

The out-neighborhood of $u$ in $H$ is $(O(u)-\{x\})\cup O(u^*)\cup \{u^*\}$ where $u^*\in O(x^*)$,
and the out-neighborhood of $v$ in $H$ is $(O(v)-\{x\})\cup O(v^*)\cup \{v^*\}$ where $v^*\in O(x^*)$.

$T$ is homogeneous $\Longrightarrow$ $u$ and $v$ have $t$ common out-neighbors in $T$ including $x$
$\Longrightarrow$ $|(O(u)-\{x\})\cap (O(v)-\{x\})| = t-1$.

$T^*$ is homogeneous $\Longrightarrow$ $u^*$ and $v^*$ have $t$ common out-neighbors in $T^*$
$\Longrightarrow$ $|O(u^*) \cup O(v^*)|=t$.

Without lose of generality, we assume that $u^*\rightarrow v^*$ in $T^*$ $\Longrightarrow$
$v^*$ is a common out-neighbor of $u$ and $v$.

$|(O(u)-\{x\})\cap (O(v)-\{x\})| = t-1$, $|O(u^*) \cup O(v^*)|=t$
and $v^*$ is a common out-neighbor of $u$ and $v$ $\Longrightarrow$
$u$ and $v$ have $(t-1)+t+1=2t$ common out-neighbors in $H$.

Case 7. $u\in O(x),v\in I(x)$.

The out-neighborhood of $u$ in $H$ is $O(u)\cup (O(u^*)-\{x^*\})\cup \{z\}$ where $u^*\in I(x^*)$,
and the out-neighborhood of $v$ in $H$ is $(O(v)-\{x\})\cup O(v^*)\cup \{v^*\}$ where $v^*\in O(x^*)$.

$T$ is homogeneous $\Longrightarrow$ $u$ and $v$ have $t$ common out-neighbors in $T$ not including $x$
$\Longrightarrow$ $|O(u)\cap (O(v)-\{x\})|=t$.

$T^*$ is homogeneous $\Longrightarrow$ $u^*$ and $v^*$ have $t$ common out-neighbors in $T^*$ not including $x^*$
$\Longrightarrow$ $|(O(u^*)-\{x^*\})\cap O(v^*)|=t$.

If $u^*\rightarrow v^*$ then $v^*$ is a common out-neighbor of $u$ and $v$, else $v^*$ is not a common out-neighbor
of $u$ and $v$.

$|O(u)\cap (O(v)-\{x\})|=t$, $|(O(u^*)-\{x^*\})\cap O(v^*)|=t$ and $v^*$ may or may not be a common out-neighbor
of $u$ and $v$ $\Longrightarrow$ $u$ and $w$ have $2t$ or $2t+1$ common out-neighbors in $H$.

Case 8. $u^*,v^*\in O(x^*)$.

The out-neighborhood of $u^*$ is $I(u)\cup O(u^*)$ where $u\in I(x)$,
and the out-neighborhood of $v^*$ is $I(v)\cup O(v^*)$ where $v \in I(x)$.

$T$ is homogeneous $\Longrightarrow$ $u$ and $v$ have $t$ common in-neighbors in $T$ not including $x$
$\Longrightarrow$ $|I(u)\cap I(v)|=t$.

$T^*$ is homogeneous $\Longrightarrow$ $u^*$ and $v^*$ have $t$ common out-neighbors in $T$ not including $x^*$
$\Longrightarrow$ $|O(u^*)\cap O(v^*)|=t$.

$|I(u)\cap I(v)|=t$ and $|O(u^*)\cap O(v^*)|=t$
$\Longrightarrow$ $u^*$ and $v^*$ have $2t$ common out-neighbors in $H$.

Case 9. $u^*,w^*\in I(x^*)$.

The out-neighborhood of $u^*$ is $(O(u^*)-\{x^*\})\cup (I(u)-\{x\})\cup \{z\}\cup \{u\}$ where $u\in O(x)$,
and the out-neighborhood of $v^*$ is $(O(v^*)-\{x^*\})\cup (I(v)-\{x\})\cup \{z\}\cup \{v\}$ where $v\in O(x)$.

$T$ is homogeneous $\Longrightarrow$ $u$ and $v$ have $t-1$ common in-neighbors besides $x$
$\Longrightarrow$ $|(I(u)-\{x\})\cap (I(v)-\{x\})|=t-1$.

$T^*$ is homogeneous $\Longrightarrow$ $u^*$ and $v^*$ have $t-1$ common out-neighbors besides $x^*$
$\Longrightarrow$ $|(O(u^*)-\{x^*\})\cap (O(v^*)-\{x^*\})|=t-1$.

Without lose of generality we may assume that $u\rightarrow v$, i.e. $u\in I(v)$ $\Longrightarrow$
$v^* \rightarrow u$ $\Longrightarrow$ $u$ is a common out-neighbor of $u^*$ and $v^*$.

$|(I(u)-\{x\})\cap (I(v)-\{x\})|=t-1$, $|(O(u^*)-\{x^*\})\cap (O(v^*)-\{x^*\})|=t-1$,
$u$ and $z$ are two common out-neighbors of $u^*$ and $v^*$ $\Longrightarrow$
$u^*$ and $v^*$ have $(t-1)+(t-1)+2=2t$ common out-neighbors in $H$.

Case 10. $u^*\in O(x^*)$, $v^*\in I(x^*)$.

The out-neighborhood of $u^*$ in $H$ is $I(u)\cup O(u^*)$ where $u\in I(x)$, and
the out-neighborhood of $v^*$ in $H$ is $(O(v^*)-\{x^*\})\cup (I(v)-\{x\})\cup \{z\}\cup \{v\}$
where $v \in O(x)$.

$T$ is homogeneous $\Longrightarrow$ $u$ and $v$ have $t$ common in-neighbors in $T$ not including $x$
$\Longrightarrow$ $|I(u)\cap (I(v)-\{x\})|=t$.

$T^*$ is homogeneous $\Longrightarrow$ $u^*$ and $v^*$ have $t$ common out-neighbors in $T^*$
not including $x^*$ $\Longrightarrow$ $|O(u^*) \cap (O(v^*)-\{x^*\})|=t$.

If $v\rightarrow u$ in $T$, i.e. $v\in I(u)$ then $u^*\rightarrow v$ and $v$ is a common out-neighbor
of $u^*$ and $v^*$, else $v$ is not a common out-neighbor of $u^*$ and $v^*$.

$|I(u)\cap (I(v)-\{x\})|=t$, $|O(u^*) \cap (O(v^*)-\{x^*\})|=t$
and $v$ may or may not be a common out-neighbor of $u^*$ and $v^*$
$\Longrightarrow$ $u^*$ and $v^*$ have $2t$ or $2t+1$ common out-neighbors in $H$.

Case 11. $u\in O(x),$ $v^*\in O(x^*)$.

The out-neighborhood of $u$ is $O(u)\cup (O(u^*)-\{x^*\})\cup \{z\}$ where $u^*\in I(x^*)$,
and the out-neighborhood of $v^*$ is $I(v)\cup O(v^*)$ where $v \in I(x)$.

$T$ is homogeneous $\Longrightarrow$
If $u\rightarrow v$ then $|O(u)\cap I(v)|=t$, and if $v\rightarrow u$ then $|O(u)\cap I(v)|=t+1$,
where $x\notin O(u)\cap I(v)$.

$T^*$ is homogeneous $\Longrightarrow$ $u^*$ and $v^*$ have $t$ common out-neighbors
not including $x$ $\Longrightarrow$ $|(O(u^*)-\{x^*\})\cap O(v^*)|=t$.

$|O(u)\cap I(v)|=t$ or $t+1$ and $|(O(u^*)-\{x^*\})\cap O(v^*)|=t$
$\Longrightarrow$ $u$ and $v^*$ have $2t$ or $2t+1$ common out-neighbors in $H$.

Case 12. $u\in I(x),$ $v^*\in I(x^*)$.

The out-neighborhood of $u$ is $(O(u)-\{x\})\cup O(u^*)\cup \{u^*\}$ where $u^*\in O(x^*)$, and
 the out-neighborhood of $v^*$ is $(O(v^*)-\{x^*\})\cup (I(v)-\{x\})\cup \{z\}\cup \{v\}$ where $v\in O(x)$.

Suppose that $u\rightarrow v$, then $v^* \rightarrow u^*$.
$T$ is homogeneous $\Longrightarrow$ there are $t$ paths of length $2$ from $u$ to $v$
including $uxv$ $\Longrightarrow$ $|(O(u)-\{x\}) \cap ((I(v)-\{x\}))|=t-1$.
Furthermore $v$ and $u^*$ are common out-neighbors of $u$ and $v^*$.

Suppose that $v\rightarrow u$, then $u^*\rightarrow v^*$.
$T$ is homogeneous $\Longrightarrow$ there are $t+1$ paths of length $2$ from $u$ to $v$
including $uxv$ $\Longrightarrow$ $|(O(u)-\{x\}) \cap ((I(v)-\{x\}))|=t$.
Furthermore $v$ and $u^*$ are not common out-neighbors of $u$ and $v^*$.

$T^*$ is homogeneous $\Longrightarrow$ $u^*$ and $v^*$ have $t$ common out-neighbors not including $x$
$\Longrightarrow$ $|O(u^*)\cap (O(v^*)-\{x^*\})| = t$.

Therefore, $u^*$ and $v^*$ have $(t-1)+2+t=2t+1$ or $t+t=2t$ common out-neighbors in $H$.

Case 13. $u\in O(x),$ $v^*\in I(x^*)$.

The out-neighborhood of $u$ is $O(u)\cup (O(u^*)-\{x^*\})\cup \{z\}$ where $u^*\in I(x^*)$,
and the out-neighborhood $v^*$ is $(O(v^*)-\{x^*\})\cup (I(v)-\{x\})\cup \{z\}\cup \{v\}$ where $v\in O(x)$.

If $v^*=u^*,$ then the common out-neighbors of $v^*$ and $u$ are precisely
$z$ and the $2t$ vertices in $O(v^*)-\{x^*\}.$
Thus, $u$ and $v^*$ have $2t+1$ common out-neighbors in $H$.

Now assume that $v^*\neq u^*$.

Suppose that $u\rightarrow v$, then $v$ is a common out-neighbor of $u$ and $v^*$.
$T$ is homogeneous $\Longrightarrow$ there are $t$ paths of length $2$ from $u$ to $v$,
not including $uxv$
$\Longrightarrow$ $|O(u)\cap ((I(v)-\{x\})\cap \{v\})|=t+1$.

Suppose that $v\rightarrow u$, then $v$ is not a common out-neighbor of $u$ and $v^*$.
$T$ is homogeneous $\Longrightarrow$
there are $t+1$ paths of length $2$ from $u$ to $v$, not including $uxv$
$\Longrightarrow$ $|O(u)\cap (I(v)-\{x\})|=t+1$.

$T^*$ is homogeneous $\Longrightarrow$ $u^*$ and $v^*$ have $t$ common out-neighbors in $T^*$ including $x^*$
$\Longrightarrow$ $|(O(u^*)-\{x^*\})\cap (O(v^*)-\{x^*\})|=t-1$.

Since $z$ is also a common out-neighbor of $u$ and $v^*$, in either case we conclude that
$u$ and $v^*$ have $(t+1)+(t-1)+1=2t+1$ common out-neighbors in $H$.

Case 14. $u\in I(x),$ $v^*\in O(x^*)$.

The out-neighborhood of $u$ is $(O(u)-\{x\})\cup O(u^*)\cup \{u^*\}$ where $u^*\in O(x^*)$,
and the out-neighborhood of $v^*$ is $I(v)\cup O(v^*)$ where $v\in I(x)$.

If $v^*=u^*$, then the common out-neighbors of $v^*$ and $u$ are precisely the $2t+1$ vertices in $O(u^*)$,
thus $u$ and $v^*$ have $2t+1$ common out-neighbors in $H$.

Now assume that $v^*\neq u^*$.

If $u \rightarrow v$, then $v^*\rightarrow u^*$ and $u^*$ is a common out-neighbor of $u$ and $v^*$.

$T$ is homogeneous $\Longrightarrow$
there are $t$ paths of length $2$ from $u$ to $v$, not including $uxv$
$\Longrightarrow$ $|(O(u)-\{x\}) \cap I(v)|=t$.

$T^*$ is homogeneous $\Longrightarrow$
$u^*$ and $v^*$ have $t$ common out-neighbors in $T^*$, not including $x^*$
$\Longrightarrow$ $|O(u^*)\cap O(v^*)|=t$.

Totally, $u$ and $v^*$ have $1+t+t=2t+1$ common out-neighbors in $H$.

If $v \rightarrow u$, then $u^*\rightarrow v^*$ and $u^*$ is not a common out-neighbor of $u$ and $v^*$.

$T$ is homogeneous $\Longrightarrow$
there are $t+1$ paths of length $2$ from $u$ to $v$, not including $uxv$
$\Longrightarrow$ $|(O(u)-\{x\}) \cap I(v)|=t+1$.

$T^*$ is homogeneous $\Longrightarrow$
$u^*$ and $v^*$ have $t$ common out-neighbors in $T^*$, not including $x^*$
$\Longrightarrow$ $|O(u^*)\cap O(v^*)|=t$.

Totally, $u$ and $v^*$ have $(t+1)+t=2t+1$ common out-neighbors in $H$.

Summarizing all cases, every pair of vertices in $H$ have $2t$ or $2t+1$ common out-neighbors.
Therefore, $H$ is near-homogeneous and the theorem is proved.

\section{Verification Program for near-homogeneous tournaments of oreder $9$ and $10$} \label{sec:Program}
In this section we give the code to verify Theorem \ref{Thm:PathExt4t+1} and Theomre \ref{Thm:PathExt4t+2} for $t=2$,
that is, a near-homogeneous tournament with $n=9$ or $n=10$ vertices is path extendable.

There are two programs used. One is in Matlab and the other is in C++.
We take the data of regular and almost regular tournaments on $9$ or $10$ vertices
from the graph database of Professor Brendan Mckay (\cite{McKay}).
These graph data are then used as the input of the Matlab program,
which works as a filter that outputs all graphs satisfying the definition of near-homogeneous tournaments.
The C++ program, accepting the output from the Matlab program, works as a verifier
which check that every graph output from the filter is path extendable.

\begin{lstlisting}[language=Matlab,caption={Matlab: Filtering data that matches the definition}]
clear all;
clc;
t=2; %n=4t+1 or n=4t+2
nrow = 10; % When the number of vertices is 9, nrow=9

% Read file
file = fopen("C:\Users\DELL\Desktop\data.txt");
data = textscan(file, '%s', 'Delimiter', '\n');
fclose(file);
lines = data{1};

result=[];
for gid = 1 : length(lines)
    % Generate adjacency matrices based on the data for each row
    line = lines{gid};
    mat = zeros(nrow);
    for k = 1 : length(line)
        r = ceil(((2 * nrow - 1) - sqrt((2 * nrow - 1)^2 - 8 * k)) / 2);
        c =k - (2 * nrow - r) * (r - 1) / 2 + r;
        mat(r, c) = str2double(line(k));
        mat(c, r) = 1 - mat(r, c);
    end

    % Determine which graphs fit the definition
    breakflag = 0;
    for i = 1:nrow
        for j = 1:nrow
            if(mat(i,j) == 1)
                tmpvec = transpose(mat(:,i)) + mat(j,:);
                if(sum(tmpvec==2) < t || sum(tmpvec==2) > t+1)
                    breakflag = 1;
                    break;
                end
            end
        end
        if breakflag == 1
            break;
        end
    end

    if breakflag == 0
       result = [result; line]; % Record data that matches the definition
    end
end
\end{lstlisting}

\begin{lstlisting}[language=C++,caption={C++: Verifying path extendability}]
#include <fstream>
#include <iostream>
#include <vector>
#include <array>
#include <algorithm>

using P2 = std::array<int, 2>;
using VEC2 = std::vector<P2>;
using P3 = std::array<int, 3>;
using VEC3 = std::vector<P3>;
using P4 = std::array<int, 4>;
using VEC4 = std::vector<P4>;
using P5 = std::array<int, 5>;
using VEC5 = std::vector<P5>;
using P6 = std::array<int, 6>;
using VEC6 = std::vector<P6>;
using P7 = std::array<int, 7>;
using VEC7 = std::vector<P7>;
using P8 = std::array<int, 8>;
using VEC8 = std::vector<P8>;
using P9 = std::array<int, 9>;
using VEC9 = std::vector<P9>;
using P10 = std::array<int, 10>;
using VEC10 = std::vector<P10>;

#define SHOWGRAPH
for(int i=0; i<10; ++i)
{
    for(int j=0; j<10; ++j)
    {
        std::cout << mat[i][j] << " ";
    }
    std::cout << std::endl;
}

#define SHOWPATH(path)
for(auto& p : path)
{
    std::cout << p << " ";
}
std::cout << std::endl;

#define SHOWVEC(VEC)
for(auto& vec : VEC)
{
    for(auto& val : vec)
    {
        std::cout << val << " ";
    }
    std::cout << std::endl;
}

bool belong(const P10& arr, const int len, int val)
{
    for(int i=0; i<len; ++i)
    {
        if(arr[i] == val)
        {
            return true;
        }
    }

    return false;
}

template<typename TYP1, typename TYP2>
bool AllElementsIn(const TYP1& arr1, const TYP2& arr2)
{
    //Use std::all_of to check that every element in arr1 exists in arr2
    return std::all_of(arr1.begin(), arr1.end(), [&arr2](int elem){return std::find(arr2.begin(), arr2.end(), elem) != arr2.end();});
}

void generate_path(const int    start,
                   const int    end,
                   const int    mat[10][10],
                   VEC2&        vec2,
                   VEC3&        vec3,
                   VEC4&        vec4,
                   VEC5&        vec5,
                   VEC6&        vec6,
                   VEC7&        vec7,
                   VEC8&        vec8,
                   VEC9&        vec9,
                   VEC10&       vec10)
{
    vec2.clear();
    vec3.clear();
    vec4.clear();
    vec5.clear();
    vec6.clear();
    vec7.clear();
    vec8.clear();
    vec9.clear();
    vec10.clear();

    VEC10   vectmp0, vectmp1;
    vectmp0.clear();
    vectmp1.clear();
    vectmp0.push_back({start});

    //Generate VEC2
    for(auto& vec : vectmp0)
    {
        int i = vec[0];
        for(int j=0; j<10; ++j)
        {
            if(mat[i][j])
            {
                if(!belong(vec, 1, j))
                {
                    if(j == end)
                    {
                        vec2.push_back({i, j});
                    }
                    else
                    {
                        vectmp1.push_back({i, j});
                    }
                }
            }
        }
    }
    std::swap(vectmp0, vectmp1);
    vectmp1.clear();
    //SHOWVEC(vec2);

    //Generate VEC3
    for(auto& vec : vectmp0)
    {
        int i = vec[1];
        for(int j=0; j<10; ++j)
        {
            if(mat[i][j])
            {
                if(!belong(vec, 2, j))
                {
                    if(j == end)
                    {
                        vec3.push_back({vec[0], i, j});
                    }
                    else
                    {
                        vectmp1.push_back({vec[0], i, j});
                    }
                }
            }
        }
    }
    std::swap(vectmp0, vectmp1);
    vectmp1.clear();
    //SHOWVEC(vec3);

    //Generate VEC4
    for(auto& vec : vectmp0)
    {
        int i = vec[2];
        for(int j=0; j<10; ++j)
        {
            if(mat[i][j])
            {
                if(!belong(vec, 3, j))
                {
                    if(j == end)
                    {
                        vec4.push_back({vec[0], vec[1], i, j});
                    }
                    else
                    {
                        vectmp1.push_back({vec[0], vec[1], i, j});
                    }
                }
            }
        }
    }
    std::swap(vectmp0, vectmp1);
    vectmp1.clear();
    //SHOWVEC(vec4);

    //Generate VEC5
    for(auto& vec : vectmp0)
    {
        int i = vec[3];
        for(int j=0; j<10; ++j)
        {
            if(mat[i][j])
            {
                if(!belong(vec, 4, j))
                {
                    if(j == end)
                    {
                        vec5.push_back({vec[0], vec[1], vec[2], i, j});
                    }
                    else
                    {
                        vectmp1.push_back({vec[0], vec[1], vec[2], i, j});
                    }
                }
            }
        }
    }
    std::swap(vectmp0, vectmp1);
    vectmp1.clear();
    //SHOWVEC(vec5);

    //Generate VEC6
    for(auto& vec : vectmp0)
    {
        int i = vec[4];
        for(int j=0; j<10; ++j)
        {
            if(mat[i][j])
            {
                if(!belong(vec, 5, j))
                {
                    if(j == end)
                    {
                        vec6.push_back({vec[0], vec[1], vec[2], vec[3], i, j});
                    }
                    else
                    {
                        vectmp1.push_back({vec[0], vec[1], vec[2], vec[3], i, j});
                    }
                }
            }
        }
    }
    std::swap(vectmp0, vectmp1);
    vectmp1.clear();
    //SHOWVEC(vec6);

    //Generate VEC7
    for(auto& vec : vectmp0)
    {
        int i = vec[5];
        for(int j=0; j<10; ++j)
        {
            if(mat[i][j])
            {
                if(!belong(vec, 6, j))
                {
                    if(j == end)
                    {
                        vec7.push_back({vec[0], vec[1], vec[2], vec[3], vec[4], i, j});
                    }
                    else
                    {
                        vectmp1.push_back({vec[0], vec[1], vec[2], vec[3], vec[4], i, j});
                    }
                }
            }
        }
    }
    std::swap(vectmp0, vectmp1);
    vectmp1.clear();
    //SHOWVEC(vec7);

    //Generate VEC8
    for(auto& vec : vectmp0)
    {
        int i = vec[6];
        for(int j=0; j<10; ++j)
        {
            if(mat[i][j])
            {
                if(!belong(vec, 7, j))
                {
                    if(j == end)
                    {
                        vec8.push_back({vec[0], vec[1], vec[2], vec[3], vec[4], vec[5], i, j});
                    }
                    else
                    {
                        vectmp1.push_back({vec[0], vec[1], vec[2], vec[3], vec[4], vec[5], i, j});
                    }
                }
            }
        }
    }
    std::swap(vectmp0, vectmp1);
    vectmp1.clear();
    //SHOWVEC(vec8);

    //Generate VEC9
    for(auto& vec : vectmp0)
    {
        int i = vec[7];
        for(int j=0; j<10; ++j)
        {
            if(mat[i][j])
            {
                if(!belong(vec, 8, j))
                {
                    if(j == end)
                    {
                        vec9.push_back({vec[0], vec[1], vec[2], vec[3], vec[4], vec[5], vec[6], i, j});
                    }
                    else
                    {
                        vectmp1.push_back({vec[0], vec[1], vec[2], vec[3], vec[4], vec[5], vec[6], i, j});
                    }
                }
            }
        }
    }
    std::swap(vectmp0, vectmp1);
    vectmp1.clear();
    //SHOWVEC(vec9);

    //Generate VEC10
    for(auto& vec : vectmp0)
    {
        int i = vec[8];
        for(int j=0; j<10; ++j)
        {
            if(mat[i][j])
            {
                if(!belong(vec, 9, j))
                {
                    if(j == end)
                    {
                        vec10.push_back({vec[0], vec[1], vec[2], vec[3], vec[4], vec[5], vec[6], vec[7], i, j});
                    }
                    else
                    {
                        vectmp1.push_back({vec[0], vec[1], vec[2], vec[3], vec[4], vec[5], vec[6], vec[7], i, j});
                    }
                }
            }
        }
    }
    std::swap(vectmp0, vectmp1);
    vectmp1.clear();
    //SHOWVEC(vec10);

}

bool check_path(const int gid,
                const VEC2&    vec2,
                const VEC3&    vec3,
                const VEC4&    vec4,
                const VEC5&    vec5,
                const VEC6&    vec6,
                const VEC7&    vec7,
                const VEC8&    vec8,
                const VEC9&    vec9,
                const VEC10&   vec10)
{
    for(auto& path2 : vec2)
    {
        for(auto& path3 : vec3)
        {
            if(AllElementsIn<P2, P3>(path2, path3))
            {
                for(auto& path4 : vec4)
                {
                    if(AllElementsIn<P3, P4>(path3, path4))
                    {
                        for(auto& path5 : vec5)
                        {
                            if(AllElementsIn<P4, P5>(path4, path5))
                            {
                                for(auto& path6 : vec6)
                                {
                                    if(AllElementsIn<P5, P6>(path5, path6))
                                    {
                                        for(auto& path7 : vec7)
                                        {
                                            if(AllElementsIn<P6, P7>(path6, path7))
                                            {
                                                for(auto& path8 : vec8)
                                                {
                                                    if(AllElementsIn<P7, P8>(path7, path8))
                                                    {
                                                        for(auto& path9 : vec9)
                                                        {
                                                            if(AllElementsIn<P8, P9>(path8, path9))
                                                            {
                                                                for(auto& path10 : vec10)
                                                                {
                                                                    if(AllElementsIn<P9, P10>(path9, path10))
                                                                    {
                                                                        //SHOWPATH(path2);
                                                                        //SHOWPATH(path3);
                                                                        //SHOWPATH(path4);
                                                                        //SHOWPATH(path5);
                                                                        //SHOWPATH(path6);
                                                                        //SHOWPATH(path7);
                                                                        //SHOWPATH(path8);
                                                                        //SHOWPATH(path9);
                                                                        //SHOWPATH(path10);
                                                                        return true;
                                                                    }
                                                                }
                                                            }
                                                        }
                                                    }
                                                }
                                            }
                                        }
                                    }
                                }
                            }
                        }
                    }
                }
            }
        }
    }

    return false;
}

int main()
{
    //Variable assignment
    std::fstream     fstr;
    std::string      line;
    std::string      lines[379];
    //There are 379 rows of data that fit the definition by Matlab
    int              id = 0;
    bool             flag = true;
    VEC2             vec2;
    VEC3             vec3;
    VEC4             vec4;
    VEC5             vec5;
    VEC6             vec6;
    VEC7             vec7;
    VEC8             vec8;
    VEC9             vec9;
    VEC10            vec10;

    //Read file
    fstr.open("/home/CUDA/data.txt", std::ios_base::in);
    while(std::getline(fstr, line))
    {
        lines[id] = line;
        ++id;
    }
    fstr.close();

    for(int i=0; i<379; ++i)
    {
        line = lines[i];

        //Generating matrix
        int mat[10][10] = {0};
        for(int k=0; k<line.length(); ++k)
        {
             int r = std::floor((19 - std::sqrt(361 - 8 * k)) / 2);
             int c =k - (20 - r - 1) * r / 2 + r + 1;
             mat[r][c] = line[k] - '0';
             mat[c][r] = 1 - mat[r][c];
        }
        //SHOWGRAPH

        //Given s, t begins to proceed
        for(int start=0; start<10; ++start)
        {
            for(int end=0; end<10; ++end)
            {
                if(mat[start][end]) //There is an edge between two vertices
                {
                    //Generation path
                    generate_path(start, end, mat, vec2, vec3, vec4, vec5, vec6, vec7, vec8, vec9, vec10);
                    //SHOWVEC(vec3);

                    //Check path
                    flag = check_path(i, vec2, vec3, vec4, vec5, vec6, vec7, vec8, vec9, vec10);
                    if(flag)
                    {
                        std::cout << "graph " << i << ", start= " << start << ", end = " << end << " is ok!\n";
                    }
                    else
                    {
                        std::cout << "graph " << i << ", start= " << start << ", end = " << end << " not ok: \n";
                        SHOWGRAPH;
                        break;
                    }
                }
            }

            if(!flag)
            {
                break;
            }
        }
    }

    return 0;
}
\end{lstlisting}

\end{document}